\newcommand{\ft}{Fourier transform}
\newcommand{\tf}{time-frequency}
\newcommand{\tfs}{time-frequency shift}
\newtheorem{theorem}{Theorem}[section]
\newtheorem{lemma}[theorem]{Lemma}
\newtheorem{corollary}[theorem]{Corollary}
\newtheorem{proposition}[theorem]{Proposition}
\newtheorem{definition}[theorem]{Definition}
\newtheorem{example}[theorem]{Example}
\newtheorem{remark}[theorem]{Remark}
\newcommand{\beqa}{\begin{eqnarray*}}
\newcommand{\eeqa}{\end{eqnarray*}}
\newcommand{\field}[1]{\mathbb{#1}}
\newcommand{\bR}{\field{R}}        %  real numbers
\newcommand{\bC}{\field{C}}        %  complex numbers
\def\la{\lambda}
\def\cF{\mathcal{F}}              % Calligraphic Letters
\def\cS{\mathcal{S}}
\def\cD{\mathcal{D}}
\def\cB{\mathcal{B}}
\def\cM{\mathcal{M}}
\def\cU{\mathcal{U}}
\def\cA{\mathcal{A}}
\def\cI{\mathcal{I}}
\def\cC{\mathcal{C}}
\def\cW{\mathcal{W}}
\def\hf{\hat{f}}
\def\rd{\bR^d}
\def\rdd{{\bR^{2d}}}
\def\R{\right)}
\def\<{\left<}
\def\>{\right>}
\def\mv1{M_v^1}
\def\mn{(m,n)}
\def\mn'{(m',n')}
\newcommand{\norm}[1]{\lVert#1\rVert}
\def\R{\mathbb{R}}
\def\Ren{\mathbb{R}^d}
\def\sch{\mathcal{S}}
\def\f{\varphi}
\def\Sn2{S_{2}(L^{2}(\Ren))}
\def\S1{S_{1}(L^{2}(\Ren))}
\def\sig00{\sigma_{0,0}}
\def\la{\langle}
\def\ra{\rangle}
\begin{document}
\begin{abstract} 
	We present a different symplectic point of view in the definition of weighted modulation spaces $M^{p,q}_m(\rd)$ and weighted Wiener amalgam spaces $W(\cF L^p_{m_1},L^q_{m_2})(\rd)$. All  the classical time-frequency representations, such as the short-time Fourier transform (STFT), the $\tau$-Wigner distributions and the  ambiguity function, can be written as metaplectic Wigner distributions $\mu(\cA)(f\otimes \bar{g})$, where $\mu(\cA)$ is the metaplectic operator and $\cA$ is the associated symplectic matrix. Namely, time-frequency representations can be represented as images of  metaplectic operators, which become  the real  protagonists of time-frequency analysis. In \cite{CR2022}, the authors suggest that any metaplectic Wigner distribution that satisfies the so-called \emph{shift-invertibility condition} can replace the STFT in the definition of modulation spaces. In this work, we prove that shift-invertibility alone is not sufficient, but it has to be complemented by an upper-triangularity condition for this characterization to hold, whereas a lower-triangularity property comes into play for Wiener amalgam spaces. The  shift-invertibility property is necessary: Rihacek and and conjugate Rihacek distributions are not shift-invertible and they fail  the characterization of the above spaces. We also exhibit examples of shift-invertible distributions without upper-tryangularity condition which do not define modulation spaces. Finally, we provide new families of time-frequency representations that characterize modulation spaces, with the purpose of replacing the time-frequency shifts with other atoms that allow to decompose signals differently, with possible new outcomes in applications.
\end{abstract}

\title[Symplectic Analysis of Time-Frequency Spaces]{Symplectic Analysis of Time-Frequency Spaces}

\author{Elena Cordero}
\address{Universit\`a di Torino, Dipartimento di Matematica, via Carlo Alberto 10, 10123 Torino, Italy}
\email{elena.cordero@unito.it}
\author{Gianluca Giacchi}
\address{Università di Bologna, Dipartimento di Matematica,  Piazza di Porta San Donato 5, 40126 Bologna, Italy; Institute of Systems Engineering, School of Engineering, HES-SO Valais-Wallis, Rue de l'Industrie 21, 1950 Sion, Switzerland; Lausanne University Hospital and University of Lausanne, Lausanne, Department of Diagnostic and Interventional Radiology, Rue du Bugnon 46, Lausanne 1011, Switzerland. The Sense Innovation and Research Center, Avenue de Provence 82
1007, Lausanne and Ch. de l’Agasse 5, 1950 Sion, Switzerland.}
\email{gianluca.giacchi2@unibo.it}
%\author{Luigi Rodino}
%\address{}
%\email{}

\thanks{}
\subjclass[2010]{42B35,42A38}

%\subjclass{35S05,35S30,
%47G30, 42C15}
%\date{}
\keywords{Time-frequency analysis, modulation spaces, Wiener amalgam spaces, time-frequency representations, metaplectic group, symplectic group}
\maketitle

\section{Introduction}

%alert{Change of perspective:}}}
%time-frequency representations are images of metaplectic operators\pause

%{\color{blue} {symplectic matrices and metaplectic operators may become the real protagonists in the framework of time-frequency analysis}}

The modulation and Wiener amalgam spaces were introduced by H. Feichtinger in 1983 in his pioneering work \cite{F1} and  they started to become popular in the early 2000s in many different frameworks. In fact, they were successfully applied in the study of  pseudodifferential and Fourier integral operators, PDE's, quantum mechanics, signal processing.  Nowadays, the rich literature on these spaces witnesses their importance:  see, e.g., the very partial list of works \cite{BGGO2005,bhimanikasso2020,BHIMANI2021107995,wiener8, CGNRJMPA,PILIPOVIC2004194,RSTT2011,sugimototomita,Nenad2018,toft1}, as well as the textbooks \cite{KB2020,Elena-book,book,Gos11,wang}.

The modulation spaces $M^{p,q}_m(\rd)$ are classically defined in terms of the short-time Fourier transform (STFT), i.e.,
\begin{equation}\label{STFTp}
	V_gf(x,\xi)=\int_{\rd}f(t)\bar g(t-x)e^{-2\pi i\xi\cdot t}dt, \qquad f\in L^2(\rd), \ x,\xi\in\rd, 
\end{equation}
where $g\in L^2(\rd)\setminus\{0\}$ is a so-called window function and the definition is extended to $(f,g)\in\cS'(\rd)\times\cS(\rd)$ as in Section \ref{subsec:23}. Namely, for a tempered distribution $f\in\cS'(\rd)$,
\[
	f\in M^{p,q}_m(\rd) \quad \Longleftrightarrow \quad V_gf\in L^{p,q}_m(\rdd), 
\]
for an arbitrary fixed window $g$. The identity $V_gf(x,\xi)=\cF(f\cdot \bar g(\cdot-x))(\xi)$ justifies the choice of the STFT as the time-frequency representation used to define modulation spaces. In fact, it states that the STFT {can be used to measure} the local frequency content of signals in terms of weighted mixed norm spaces {(see Section \ref{subsec:MSs} below)}. \\

Apart of its interpretation,  there is no reason why the STFT shall serve as the leading time-frequency representation in the definition of modulation spaces. Actually, there are many reasons that make it unsuitable in many contexts, such as the theory of pseudodifferential operators and quantum mechanics, cf. \cite{Gos11,Shafi}. 

In  \cite{Gos11} M. De Gosson proved that the (cross-)Wigner distribution, defined for all $f,g\in L^2(\rd)$ as
\begin{equation}\label{Wignerp}
	W(f,g)(x,\xi)=\int_{\rd}f(x+\frac{t}{2})\bar g(x-\frac{t}{2}) e^{-2\pi i\xi\cdot t}dt, \qquad x,\xi\in\rd,
\end{equation}
can be used to define modulation spaces. Namely,
\[
	f\in M^{p,q}_m(\rd) \quad \Leftrightarrow \quad W(f,g)\in L^{p,q}_m(\rdd).
\]
Later, in  \cite{CR2021}, the above characterization was extended to  (cross-)$\tau$-Wigner distributions 
\[
W_\tau(f,g)(x,\xi)=\int_{\rd}f(x+\tau t)\bar g(x-(1-\tau)t) e^{-2\pi i\xi\cdot t}dt, \qquad x,\xi\in\rd,
\]
with $\tau\in\bR\setminus\{0,1\}$. The cases $\tau=0,1$ correspond to the so-called (cross-) Rihacek and conjugate Rihacek distributions, respectively. Their explicit expressions
\[
	W_0(f,g)(x,\xi)=f(x)\overline{\hat g(\xi)}e^{-2\pi i\xi\cdot x} \quad and \quad W_1(f,g)(x,\xi)=\hat f(\xi)\overline{g(x)}e^{2\pi i\xi\cdot x},
\]
$x,\xi\in\rd$, reveal that 
\[
	W_0(f,g)\in L^{p,q}(\rdd) \quad \Leftrightarrow \quad f\in L^p(\rd),
\]
as well as
\[
	W_1(f,g)\in L^{p,q}(\rdd) \quad \Leftrightarrow \quad f\in \cF L^p(\rd).
\]
The lowest common denominator of these time-frequency representations is that they can all be written as 
\begin{equation}\label{WApre}
W_\cA(f,g)=\mu(\cA)(f\otimes \bar g), 
\end{equation}
where $\mu(\cA)\in Mp(2d,\bR)$ is a so-called metaplectic operator and $\cA\in Sp(2d,\bR)$ is the unique associated symplectic matrix, we refer to Section \ref{subsec:26} for the precise definitions. In fact,
\[
	V_gf=\mu(A_{ST})(f\otimes\bar g) \quad \mbox{and} \quad W_\tau(f,g)=\mu(A_\tau)(f\otimes\bar g),
\]
where
\begin{equation}
	\label{AST}
	A_{ST}=\begin{pmatrix}
		I_{d\times d} & -I_{d\times d} & 0_{d\times d} & 0_{d\times d}\\
		0_{d\times d} & 0_{d\times d} & I_{d\times d} & I_{d\times d}\\
		0_{d\times d}  & 0_{d\times d}  & 0_{d\times d}  & -I_{d\times d} \\
		-I_{d\times d}  & 0_{d\times d} & 0_{d\times d} & 0_{d\times d} 
	\end{pmatrix}
\end{equation}
and
\begin{equation}\label{Atau}
A_{\tau}=\begin{pmatrix}
		(1-\tau)I_{d\times d} & \tau I_{d\times d} & 0_{d\times d} & 0_{d\times d}\\
		0_{d\times d} & 0_{d\times d} & \tau I_{d\times d} & -(1-\tau)I_{d\times d}\\
		0_{d\times d} & 0_{d\times d} & I_{d\times d} & I_{d\times d}\\
		-I_{d\times d} & I_{d\times d} & 0_{d\times d} & 0_{d\times d}
	\end{pmatrix}.
	\end{equation}
Using (\ref{WApre}) plenty of new time-frequency representations, that we call \textit{metaplectic Wigner distributions}, can be defined in terms of metaplectic operators, cf. \cite{CR2021, CR2022, CGR2022}. The question becomes for which $\mu(\cA)\in Mp(2d,\bR)$ or, equivalently, for which $\cA\in Sp(2d,\bR)$, the following property holds: for a moderate weight function $m$, $0<p,q\leq \infty$, and  a fixed non-zero $g\in\cS(\rd)$,
\begin{equation}\label{charMod}
	f\in M^{p,q}_m(\rd) \quad \Leftrightarrow \quad W_\cA(f,g)\in L^{p,q}_m(\rdd), \quad f\in \cS'(\rd).
\end{equation}
A partial answer is given in \cite{CR2022, CGR2022}, where the authors proved that for distributions of the form  $W_\cA(f,g)=\cF_2\mathfrak{T}_L(f\otimes\bar g)$ (see Example \ref{es22} below for their  definition), the characterization property holds. In \cite{CR2022}  it is conjectured that the characterization   (\ref{charMod}) should hold for $W_\cA$ satisfying the so-called \textit{shift-invertibility} property. Namely, for every $W_\cA$  the following equality holds
\begin{equation}\label{EA}
	|W_\cA(\pi(y,\eta)f,g)|=|W_\cA(f,g)(\cdot-E_\cA (y,\eta))|, \qquad (y,\eta)\in\rdd,
\end{equation}
for a suitable matrix $E_\cA\in\bR^{2d\times2d}$, where  $\pi(y,\eta)f(t)=e^{2\pi i\eta\cdot t}f(t-y)$ (see \eqref{eq:kh25} ahead).  $W_\cA$ is called  \emph{shift-invertible} if $E_\cA\in GL(2d,\bR)$ (that is, $E_{\cA}$ is invertible).\\

The main results of this work solve this conjecture for the Banach setting $1\leq p,q\leq\infty$. We summarize them is the following theorem.

\begin{theorem}\label{thmFp}
	Let $1\leq p,q\leq \infty$, $m$ be a $v$-moderate weight, $g$ a fixed non-zero window function in $\cS(\rd)$. Consider a metaplectic operator $\mu(\cA)\in Mp(2d,\bR)$ and let $\cA$ be the unique symplectic matrix associated to $\mu(\cA)$. The following statements hold.\\ 
	(i) If $m\circ E_\cA^{-1}\asymp m$ and $E_\cA\in GL(2d,\bR)$ in \eqref{EA} is upper triangular, then (\ref{charMod}) holds with equivalence of norms. \\
	(ii) If $(v\otimes v)\circ\cA^{-1}\asymp v\otimes v$, then the window class in $(i)$ can be enlarged to $M^1_v(\rd)\setminus\{0\}$.\\
	(iii) If $1\leq p=q\leq\infty$ and $E_\cA\in GL(2d,\bR)$, then (\ref{charMod}) holds with equivalence of norms.
%	\textcolor{red}{(iv) If $E_\cA\notin GL(2d,\bR)$, then there exists $f\in M^{p,q}(\rd)$ such that $W_\cA(f,g)\notin L^{p,q}(\rdd)$.\\
%	(v) If $E_\cA\in GL(2d,\bR)$ is not upper-triangular and $p\neq q$, then there exists $f\in M^{p,q}(\rd)$ such that $W_\cA(f,g)\notin L^{p,q}(\rdd)$.
%	}
\end{theorem}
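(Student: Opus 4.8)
The plan is to reduce everything to the defining STFT characterization of $M^{p,q}_m$ by treating the metaplectic Wigner distribution $W_\cA(f,g)$ as an STFT ``in disguise''. The starting point is the shift-invertibility relation \eqref{EA}: since $W_\cA(f,g)=\mu(\cA)(f\otimes\bar g)$, one first establishes a pointwise formula comparing $W_\cA(f,g)$ with $W_{A_{ST}}(f,g)=V_gf$. Concretely, I would show that there exist a unimodular function $\Phi$ on $\rdd$, a matrix $T_\cA\in GL(2d,\bR)$, and a suitable window manipulation so that $|W_\cA(f,g)(z)|$ is, up to an affine change of variables governed by $E_\cA$, comparable to $|V_{g'}f(z')|$ for an appropriate $g'\in\cS(\rd)$ built from $g$. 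For $p=q$ this is the only structural input one needs, because the $L^{p,p}_m=L^p_m$ norm is a genuine (unmixed) $L^p$ norm and is therefore invariant — up to the constant coming from $m\circ E_\cA^{-1}\asymp m$, which for part (iii) one extracts from the hypothesis $E_\cA\in GL(2d,\bR)$ via moderateness of $m$ — under the linear substitution $z\mapsto E_\cA z$. The change of variables contributes only the Jacobian $|\det E_\cA|$, an absolute constant, and the unimodular prefactor $\Phi$ disappears under the modulus. This is the reason the upper-triangularity hypothesis of part (i) can be dropped precisely when $p=q$: mixed-norm spaces $L^{p,q}_m$ with $p\neq q$ are \emph{not} invariant under arbitrary $GL(2d,\bR)$ substitutions of the time-frequency variables — only under block-triangular ones that do not ``mix'' the $x$ and $\xi$ blocks — whereas $L^p_m$ is invariant under all of them.

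In more detail, the key steps I would carry out for (iii) are: (1) derive the covariance identity \eqref{EA} in the usable form $|W_\cA(f,g)(z)| = |V_{\tilde g}f(E_\cA^{-1}z)|\cdot c$, or more realistically $W_\cA(f,g) = \Phi\cdot \bigl(V_{\tilde g}f\bigr)\circ E_\cA^{-1}\circ(\text{shift})$ with $|\Phi|\equiv 1$, where $\tilde g\in\cS(\rd)\setminus\{0\}$ is obtained from $g$ through the metaplectic action (this uses that $\mu(\cA)$ maps $\cS(\rd)\otimes\cS(\rd)$ to $\cS(\rdd)$ and the explicit decomposition of $\cA$ relative to $A_{ST}$); (2) take $L^p$ norms of both sides, use $|\Phi|\equiv1$ to kill the phase, change variables $z\mapsto E_\cA z$ picking up $|\det E_\cA|^{1/p}$, and invoke $m(E_\cA^{-1}z)\asymp m(z)$ to conclude $\|W_\cA(f,g)\|_{L^p_m} \asymp \|V_{\tilde g}f\|_{L^p_m}$; (3) apply the classical window-independence of the modulation space norm (the STFT characterization recalled in the introduction, valid for any Schwartz window, hence for $\tilde g$) to get $\|V_{\tilde g}f\|_{L^p_m}\asymp \|f\|_{M^p_m}$. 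Chaining (2) and (3) gives \eqref{charMod} with equivalence of norms.

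For (i) the same skeleton applies but one must keep track of the \emph{mixed} norm, and this is where the upper-triangularity of $E_\cA$ is used: writing $E_\cA=\begin{pmatrix}A & B\\ 0 & D\end{pmatrix}$ in $d\times d$ blocks, the substitution $(x,\xi)\mapsto(Ax+B\xi, D\xi)$ transforms the iterated integral $\bigl(\int(\int |\cdot|^p\,dx)^{q/p}d\xi\bigr)^{1/q}$ by first doing the inner $x$-integral, where for fixed $\xi$ the map $x\mapsto Ax+B\xi$ is an affine bijection of $\rd$ with constant Jacobian $|\det A|$, and then the outer $\xi$-integral, where $\xi\mapsto D\xi$ is again an affine bijection with Jacobian $|\det D|$; the block-triangular structure is exactly what guarantees the inner integral remains a pure integral in $x$ alone. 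A lower-triangular $E_\cA$ would instead preserve the ``frequency-then-time'' iterated norm, which is the Wiener amalgam side alluded to in the abstract; a full $GL(2d,\bR)$ matrix would genuinely scramble the two and the equivalence can fail (as the paper's counterexamples show). Part (ii) is then an enlargement-of-window argument: the map $f\otimes\bar g\mapsto \mu(\cA)(f\otimes\bar g)$ together with $(v\otimes v)\circ\cA^{-1}\asymp v\otimes v$ ensures $\mu(\cA)$ acts boundedly and invertibly on $M^1_{v\otimes v}(\rdd)=M^1_v(\rd)\hat\otimes M^1_v(\rd)$, so $\tilde g\in M^1_v(\rd)$ whenever $g\in M^1_v(\rd)$, and the STFT characterization of $M^{p,q}_m$ is known to hold for windows in $M^1_v(\rd)\setminus\{0\}$ (not merely Schwartz).

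The main obstacle I anticipate is Step (1): making the covariance identity \eqref{EA} precise \emph{as an equality of functions} (phase included), not merely in absolute value, and identifying the transplanted window $\tilde g$ explicitly enough to know it is a nonzero Schwartz (resp. $M^1_v$) function. This requires unpacking the group law in $Mp(2d,\bR)$ — computing $\mu(\cA)\,\mu(A_{ST})^{-1}$ and recognizing its action on $f\otimes\bar g$ as (phase)$\,\times\,$(linear substitution)$\,\times\,$(partial Fourier transform on the window slot) — and controlling that the resulting substitution matrix on the $(x,\xi)$-side is exactly $E_\cA$ with the claimed triangularity. Everything after that is the routine change-of-variables and moderate-weight bookkeeping sketched above, together with a symmetric argument for the reverse implication in \eqref{charMod}.
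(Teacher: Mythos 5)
The crux of your argument is Step (1): the claim that shift-invertibility forces the exact pointwise structure $W_\cA(f,g)=\Phi\cdot\bigl(V_{\tilde g}f\bigr)\circ E_\cA^{-1}$ with $|\Phi|\equiv1$ and $\tilde g$ a (metaplectically) modified window. You flag it as an anticipated obstacle, but in the proposal it is never established, and it is not the routine ``unpacking of the group law'' you suggest: $\mu(\cA)\mu(A_{ST})^{-1}$ is a general metaplectic operator on $L^2(\rdd)$, and nothing in the covariance relation \eqref{EA} alone prevents it from involving, say, partial Fourier transforms or chirp convolutions in the $(x,\xi)$-variables, rather than a phase times a substitution times an operation confined to the window slot. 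What you would actually need is a factorization statement: for shift-invertible $\cA$ there exist a symmetric $Q$, the matrix $E_\cA$, and some $\cB'\in Sp(d,\bR)$ such that $\cA= V_Q\,\cD_{E_\cA^{-1}}\,A_{ST}\,\cC_1$, with $\cC_1$ built from $\cB'$ as in Appendix B. Matching covariances under $\pi(w)$, which is all that \eqref{EA} provides, determines the transform only up to an intertwiner, and even the admissibility of a quadratic phase $\Phi$ already imposes a nontrivial compatibility condition relating $E_\cA$ and $F_\cA$. A structure theorem of this kind for shift-invertible distributions does exist in subsequent work, but it is a substantial result in its own right; resting the whole proof on it, unproved, is a genuine gap (and its proof is at least as demanding as the argument it would replace).

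By contrast, the paper needs no exact identity: it combines the reproducing formula of Lemma \ref{intertFormula} and its dual, Corollary \ref{cor218}, with \eqref{EA} to obtain the two convolution bounds $|W_\cA(f,g)|\lesssim |V_gf\circ E_\cA^{-1}|\ast|W_\cA(\gamma,g)|$ and $|V_gf|\lesssim\bigl(|W_\cA(f,g)|\ast|[W_\cA(g,\gamma)]^\ast|\bigr)\circ E_\cA$, and then applies Young's inequality $L^{p,q}_m\ast L^1_v\hookrightarrow L^{p,q}_m$ together with the Appendix A dilation results (upper-triangular invertible matrices preserve $L^{p,q}_m$, arbitrary invertible ones preserve $L^p_m$, which is exactly why part (iii) needs no triangularity). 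Your discussion of why upper-triangularity matters for mixed norms and is superfluous for $p=q$ is correct and coincides with Appendix A, but it only becomes operative after the convolution step (or after your unproven identity). Finally, for part (ii) your scheme would additionally require $\tilde g=\mu(\cB')g\in M^1_v(\rd)$, i.e.\ boundedness of $\mu(\cB')$ on $M^1_v(\rd)$, which needs $v\circ(\cB')^{-1}\asymp v$ --- a condition you would still have to extract from $(v\otimes v)\circ\cA^{-1}\asymp v\otimes v$; the paper instead shows directly that $W_\cA(\gamma,g)\in L^1_v(\rdd)$ for $g\in M^1_v(\rd)$, using the boundedness of $\mu(\cA)$ on $M^1_{v\otimes v}(\rdd)$ from \cite{Fuhr} and the embedding $M^1_{v\otimes v}(\rdd)\hookrightarrow L^1_v(\rdd)$, and then runs the same convolution estimates.
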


The core of Theorem \ref{thmFp} is that shift-invertibility alone is not sufficient to characterize modulation spaces. This is not surprising: as it is observed in \cite{Fuhr}, $E_\cA$ has to be upper triangular, other than invertible, for the operator $f\mapsto f(E_\cA\cdot)$ to preserve the $L^{p,q}_m$ spaces. Nevertheless, we claim that the conditions on $E_\cA$ stated in Theorem \ref{thmFp} are fundamental to characterize modulation spaces. To support this thesis, we stress that the Rihacek distributions are examples of non shift-invertible metaplectic Wigner distributions for which Theorem \ref{thmFp} fails. 

Furthermore, we provide examples of shift-invertible metaplectic Wigner distributions
which characterize modulation spaces if and only if the matrix  $E_\cA$ is upper triangular, see Example \ref{upper-sharp} below (see also Remark \ref{remark3.8}).

A relevant contribution of this work consists of  constructing  explicit examples of metaplectic Wigner distributions that can be used to characterize modulation spaces, some of them extend the representations studied in \cite{Zhang21bis, Zhang21} (see also references therein). Our leading idea is to substitute the time-frequency shifts in (\ref{STFTp}) and (\ref{Wignerp}) with new \textit{time-frequency atoms}. Namely, we replace the chirp $e^{2\pi i\xi\cdot t}$ with the more general one $\Phi_C(\xi,t)=e^{i\pi (\xi,t)^T\cdot C(\xi,t)^{T}}$, with $C\in\bR^{2d\times 2d}$ symmetric matrix. The importance of these examples is that different atoms provide alternative ways to decompose signals into fundamental time-frequency functions, yielding to important applications in many branches of engineering, learning theory and signal analysis. In particular, discretization of time-frequency representations under this point of view could entail consequences in frame theory, phase retrival, and maybe in other aspects of signal processing, producing advances in these frameworks.

Besides the applications above, the metaplectic  approach to time-frequency representations  carry a high potential in many other situations were time-frequency representations play a crucial role, see e.g.,  \cite{Filippo2017,Cohen1,Filippocoorbite,ES89,LUEF2018288,MRomero2022}. 
Finally, observe that a first attempt to generalize the   $\tau$-Wigner distributions is contained in the work \cite{BCGT2022}, see also \cite{CT}.

\textbf{Outline.} Section \ref{sec:preliminaries} contains preliminaries and notation. The main results are exposed in Section \ref{sec:SIMS} whereas Section \ref{esempi} exhibits the most relevant examples. In the Appendix $A$ we extend some of the results in \cite{Fuhr} to general invertible matrices and to the quasi-Banach setting. In the Appendix $B$ we compute the matrices associated to {tensor products of metaplectic operators.} %{that enjoy a so-called \emph{splitting property}.

\section{Preliminaries}\label{sec:preliminaries}
\textbf{Notation.} We denote $t^2=t\cdot t$,  $t\in\rd$, and
$xy=x\cdot y$ (scalar product on $\Ren$).  The space   $\sch(\Ren)$ is the Schwartz class whereas $\sch'(\Ren)$  the space of temperate distributions.   The brackets  $\la f,g\ra$ denote the extension to $\sch' (\Ren)\times\sch (\Ren)$ of the inner product $\la f,g\ra=\int f(t){\overline {g(t)}}dt$ on $L^2(\Ren)$ (conjugate-linear in the second component). We write a point in the phase space (or \tf\ space) as
$z=(x,\xi)\in\rdd$, and  the corresponding phase-space shift (\tfs )
acts on a function or distribution  as
\begin{equation}
\label{eq:kh25}
\pi (z)f(t) = e^{2\pi i \xi\cdot t} f(t-x), \, \quad t\in\rd.
\end{equation}
%We shall work with  lattices in the phase-space $\Lambda\subset \rdd$,  $\Lambda=A\zdd$,  with $A\in GL(2d,\R)$ and we will denote by $Q$ a fundamental domain containing the origin.  
$\cC_0^\infty(\rdd)$ denotes the space of smooth functions with compact support. The notation $f\lesssim g$ means that $f(x)\leq Cg(x)$ for all $x$. If $ g\lesssim f\lesssim g$ or, equivalently, $ f \lesssim g\lesssim f$, we write $f\asymp g$. For two measurable functions $f,g:\rd\to\bC$, we set $f\otimes g(x,y):=f(x)g(y)$. If $X,Y$ are vector spaces, $X\otimes Y$ is the unique completion of $\text{span}\{x\otimes y : x\in X, y\in Y\}$. If $X(\rd)=L^2(\rd)$ or $\cS(\rd)$, the set $\text{span}\{f\otimes g:f,g\in X(\rd)\}$ is dense in $X(\rdd)$. Thus, for all $f,g\in\cS'(\rd)$ the operator $f\otimes g\in\cS'(\rdd)$ characterized by its action on $\varphi\otimes\psi\in\cS(\rdd)$ by
\[
	\la f\otimes g,\varphi\otimes\psi\ra = \la f,\varphi\ra\la g,\psi\ra
\]
extends uniquely to a tempered distribution of $\cS'(\rdd)$. The subspace $\text{span}\{f\otimes g: f,g\in\cS'(\rd)\}$ is dense in $\cS'(\rdd)$.

\subsection{Weighted mixed norm spaces}
We denote by $v$ a continuous, positive, even, submultiplicative weight function on $\rdd$, i.e., 
$ v(z_1+z_2)\leq v(z_1)v(z_2)$, for all $ z_1,z_2\in\rdd$. Observe that since $v$ is even, positive and submultiplicative, it follows that $v(z)\geq1$ for all $z \in\rdd$. 
We say that $w\in \mathcal{M}_v(\rdd)$ if $w$ is a positive, continuous, even weight function  on $\rdd$  {\it
	$v$-moderate}:
$ w(z_1+z_2)\leq Cv(z_1)w(z_2)$  for all $z_1,z_2\in\rdd$. A fundamental example is the polynomial weight 
\begin{equation}\label{vs}
	v_s(z) =(1+|z|)^{s},\quad s\in\bR,\quad z\in\rdd.
\end{equation}
Two weights $m_1,m_2$ are equivalent if $m_1\asymp m_2$. For example, $v_s(z)\asymp (1+|z|^2)^{s/2}$.\\

If $m\in\cM_v(\rdd)$, $0<p,q\leq\infty$ and $f:\rdd\to\bC$ measurable, we set 
\[
	\norm{f}_{L^{p,q}_m}:=\left(\int_{\rd}\left(\int_{\rd}|f(x,y)|^pm(x,y)^p\right)^{q/p}dy\right)^{1/q}=\norm{y\mapsto\norm{f(\cdot,y)m(\cdot,y)}_{p}}_{q},
\]
with the obvious adjustments when $\min\{p,q\}=\infty$. The space of measurable functions $f$ having $\norm{f}_{L^{p,q}_m}<\infty$ is denoted by $L^{p,q}_m(\rdd)$. If $m\in\cM_v(\rdd)$ and $1\leq p,q\leq\infty$, then $L^{p,q}_{m}(\rdd)\ast L^1_v(\rdd)\hookrightarrow L^{p,q}_m(\rdd)$.

\subsection{Fourier transform}
In this work, the Fourier transform of $f\in \cS(\rd)$ is defined as
\[
	\hat f(\xi)=\int_{\rd} f(x)e^{-2\pi i\xi\cdot x}dx, \qquad \xi\in\rd.
\]
If $f\in\cS'(\rd)$, the Fourier transform of $f$ is defined by duality as the tempered distribution characterized by
\[
	\langle \hat f,\hat\varphi\rangle=\langle f,\varphi\rangle, \qquad \varphi\in\cS(\rd).
\]
We denote with $\cF f:=\hat f$ the Fourier transform operator. It is a surjective automorphism of $\cS(\rd)$ and $\cS'(\rd)$, as well as a surjective isometry of $L^2(\rd)$.

If $1\leq j\leq d$, the \textit{partial Fourier transform} with respect to the $j$th coordinate is defined  as
\begin{equation}\label{FJ}
	\cF_j f(t_1,\ldots, t_{j-1},\xi_j,t_{j+1}\ldots,t_d)=\int_{-\infty}^\infty f(t_1,\ldots,t_d)e^{-2\pi it_j\xi_j}dt_j,\quad f\in L^1(\rd).
\end{equation}
Analogously, the definition is transported on $\cS'(\rd)$ in terms of antilinear duality pairing: for all $f\in\cS'(\rd)$,
\[
	\langle \cF_j f,\varphi\rangle :=\langle f,\cF_j^{-1}\varphi\rangle, \qquad \varphi\in\cS(\rd).
\]
Observe that $\cF_j\cF_k=\cF_k\cF_j$ for all $1\leq j, k\leq d$. In particular, 
\begin{equation}\label{compFj}
	\cF=\cF_{\sigma(1)}\circ\ldots\circ\cF_{\sigma(d)}
	\end{equation}
	holds that for all permutation $\sigma\in Sym(\{1,\ldots,d\})$. Finally, for all $1\leq j\leq d$,
	\[
		\cF_j^2 f(x_1,\ldots,x_d)=f(x_1,\ldots,x_{j-1},-x_j,x_{j+1},\ldots,x_d).
	\]

\subsection{Time-frequency analysis tools}\label{subsec:23}
The \textit{short-time Fourier transform} of $f\in L^2(\rd)$ with respect to the window $g\in L^2(\rd)$ is the time-frequency representation defined in \eqref{STFTp}.
Its definition extends to $(f,g)\in\cS'(\rd)\times\cS(\rd)$ by antilinear duality as $V_gf(x,\xi)=\langle f,\pi(x,\xi)g\rangle$. Among all the equivalent definitions of $V_gf$, we recall that $$V_gf=\cF_2\mathfrak{T}_L(f\otimes\bar g),$$ where $\mathfrak{T}_LF(x,y)=F(y,y-x)$ and $\cF_2$ is the partial Fourier transform with respect to the second coordinate, cf. Example \ref{es22} below. This equality allows to extend the definition of $V_gf$ up to $\cS'(\rd)\times\cS'(\rd)$.

 We recall the fundamental identity of time-frequency analysis:
\begin{equation}\label{fundid}
	V_gf(x,\xi)=e^{-2\pi ix\cdot \xi}V_{\hat g}\hat f(J(x,\xi)), \qquad (x,\xi)\in\rdd,
\end{equation}
where the symplectic matrix $J$ is defined by 	\begin{equation}\label{defJ}
	J=\begin{pmatrix}
		0_{d\times d} & I_{d\times d}\\
		-I_{d\times d} & 0_{d\times d}
	\end{pmatrix}.
\end{equation}
	Here $I_{d\times d}\in\bR^{d\times d}$ is the identity matrix and $0_{d\times d}$ is the matrix of $\bR^{d\times d}$ having all zero entries.
The reproducing formula for the STFT reads as follows: for all $g,\gamma\in L^2(\rd)$ such that $\la g,\gamma\ra\neq0$,
\begin{equation}
	f(t)=\frac{1}{\la\gamma,g\ra}\int_{\rdd}V_gf(x,\xi)\pi(x,\xi)\gamma (t) dxd\xi,
\end{equation}
where the identity holds in $L^2(\rd)$ as a vector-valued integral in a weak sense (see, e.g., \cite[Subsection 1.2.4]{Elena-book}).

In high-dimensional complex features information processing $\tau$-Wigner distributions  ($\tau\in\bR$) play a crucial role \cite{ZJQ21}. They are defined as
\begin{equation}\label{tauWigner}
	W_\tau(f,g)(x,\xi)=\int_{\rd} f(x+\tau t)\overline{g(x-(1-\tau)t)}e^{-2\pi i\xi \cdot t}dt, \qquad  x,\xi\in\rd,
\end{equation}
for $f,g\in L^2(\rd)$.
The cases $\tau=0$ and $\tau=1$ are the so-called (cross-)\textit{Rihacek distribution} \cite{Guo-Zhao22}
\begin{equation}\label{RD}
W_0(f,g)(x,\xi)=f(x)\overline{\hat g(\xi)}e^{-2\pi i\xi\cdot x}, \quad x,\xi\in\rd,
\end{equation}
 and (cross-)\textit{conjugate Rihacek distribution}
 \begin{equation}\label{CRD}
 W_1(f,g)(x,\xi)=\hat f(\xi)\overline{g(x)}e^{2\pi i\xi\cdot x}, \quad x,\xi\in\rd.
 \end{equation}
 Observe that $W_\tau f(x,\xi)=\cF_2\mathfrak{T}_{L_\tau}(f\otimes\bar g)$, where for any $F$ on $\rdd$, $$\mathfrak{T}_{L_\tau}F(x,y)=F(x+\tau y,x-(1-\tau)y),\quad x,y\in\rd.$$

\subsection{Modulation  spaces \cite{KB2020,F1,Feichtinger_1981_Banach,book,Galperin2004,Kobayashi2006}} \label{subsec:MSs}
Fix $0<p,q\leq\infty$, $m\in\mathcal{M}_v(\rdd)$, and $g\in\cS(\rd)\setminus\{0\}$. The \textit{modulation space} $M^{p,q}_m(\rd)$ is classically defined as the space of tempered distributions $f\in\cS'(\rd)$ such that $$\norm{f}_{M^{p,q}_m}:=\Vert V_gf\Vert_{L^{p,q}_m}<\infty.$$ If $\min\{p,q\}\geq1$, the quantity $\norm{\cdot}_{M^{p,q}_m}$ defines a norm, otherwise a quasi-norm. Different windows give rise to equivalent (quasi-)norms. Modulation spaces are (quasi-)Banach spaces and the following continuous inclusions hold: \\
if $0<p_1\leq p_2\leq\infty$, $0<q_1\leq q_2\leq\infty$ and $m_1,m_2\in\mathcal{M}_{v}(\rdd)$ satisfy $m_2\lesssim m_1$: $$ \cS(\rd)\hookrightarrow M^{p_1,q_1}_{m_1}(\rd)\hookrightarrow M^{p_2,q_2}_{m_2}(\rd)\hookrightarrow\cS'(\rd).$$ In particular, $M^1_v(\rd)\hookrightarrow M^{p,q}_m(\rd)$ whenever $m\in\mathcal{M}_v(\rdd)$ and $\min\{p,q\}\geq1$. We will also use the inclusion $M^1_{m\otimes 1}(\rdd)\hookrightarrow L^1_m(\rdd)$. We denote with $\cM^{p,q}_m(\rd)$ the closure of $\cS(\rd)$ in $M^{p,q}_m(\rd)$, which coincides with the latter whenever $p,q<\infty$. Moreover, if $1\leq p,q<\infty$, $(M^{p,q}_m(\rd))'=M^{p',q'}_{1/m}(\rd)$, where $p'$ and $q'$ denote the Lebesgue conjugate exponents of $p$ and $q$ respectively. Finally, if $m_1\asymp m_2$, then $M^{p,q}_{m_1}(\rd)=M^{p,q}_{m_2}(\rd)$ for all $p,q$.\\

\subsection{Wiener amalgam spaces \cite{feichtinger-wiener-type,Feichtinger_1990_Generalized,Rauhut2007Winer}} For  $0<p,q\leq\infty$,  $m_1,m_2\in\cM_v(\rdd)$, the \textit{Wiener amalgam space} $W(\cF L^p_{m_1},L^q_{m_2})(\rd)$,   is  defined as the space of tempered distributions $f\in\cS'(\rd)$ such that for some (hence, all) window $g\in\cS(\rd)\setminus\{0\}$, 
\[
	\norm{f}_{W(\cF L^p_{m_1},L^q_{m_2})}:=\norm{ x\mapsto m_2(x)\norm{V_gf(x,\cdot)m_1}_p}_{q}<\infty.
\]
Using (\ref{fundid}), we have that $\norm{f}_{W(\cF L^p_{m_1},L^q_{m_2})}=\norm{\hat f}_{M^{p,q}_{m_1\otimes m_2}}$, so that $$\cF M^{p,q}_{m_1\otimes m_2}(\rd)=W(\cF L^p_{m_2},L^q_{m_1})(\rd).$$ 
Also, for $p=q$, 
\begin{equation}\label{WeM}
W(\cF L^p_{m_1},L^p_{m_2})(\rd)=M^p_{m_1\otimes m_2}(\rd).\end{equation}

\subsection{The symplectic group $Sp(d,\mathbb{R})$ and the metaplectic operators}\label{subsec:26}
	A matrix $\cA\in\bR^{2d\times 2d}$ is symplectic, we write $\cA\in Sp(d,\bR)$, if $\cA^TJ\cA=J$, where the matrix $J$ is defined in \eqref{defJ}. We represent $\cA$ as a block matrix 
	\begin{equation}\label{blocksA}
		\cA=\begin{pmatrix} A & B\\
		C & D\end{pmatrix}.
	\end{equation}
%	with $A,B,C,D\in\bR^{d\times d}$. It is straightforward to verify that $\cA\in\bR^{2d\times 2d}$ is symplectic if and only if the following conditions hold:
%	\begin{equation}\label{carSyymp}
%		\begin{cases}
%			AC^T=A^TC,\\
%			BD^T=B^TD,\\
%			A^TD-C^TB=I_{d\times d}.
%		\end{cases}
%	\end{equation}
	If $\cA\in Sp(d,\bR)$, then $\det(\cA)=1$.
	% and the inverse of $\cA$ is explicitly given in terms of the blocks of $\cA$ as
%	\[
%		\cA^{-1}=\begin{pmatrix} D^T & -B^T\\
%		-C^T & A^T\end{pmatrix}.
%	\]
	%If $\cA\in Sp(d,\bR)$ and $\det(B)\neq0$, $\cA$ is called \textbf{free}. 
 The matrix $\cA\in Sp(d,\bR)$ with block decomposition \eqref{blocksA} is called \emph{free} if $\det B\not=0$.
  
	For $L\in GL(d,\bR)$ and $C\in\bR^{d\times d}$, $C$ symmetric, we define
	\begin{equation}\label{defDLVC}
		\cD_L:=\begin{pmatrix}
			L^{-1} & 0_{d\times d}\\
			0_{d\times d} & L^T
		\end{pmatrix} \qquad \text{and} \qquad V_C:=\begin{pmatrix}
			I_{d\times d} & 0\\ C & I_{d\times d}
		\end{pmatrix}.
	\end{equation}
	$J$ and the matrices in the form $V_C$ ($C$ symmetric) and $\cD_L$ ($L$ invertible) generate the group $Sp(d,\bR)$.\\ %Also, if $\cA\in Sp(d,\bR)$, there exist $\cA_1,\cA_2\in Sp(d,\bR)$ free such that $\cA=\cA_1\cA_2$. 

Let $\rho$ be the Schr\"odinger representation of the Heisenberg group, that is $$\rho(x,\xi,\tau)=e^{2\pi i\tau}e^{-\pi i\xi\cdot x}\pi(x,\xi),$$ for all $x,\xi\in\rd$, $\tau\in\bR$. For all $\cA\in Sp(d,\bR)$, $\rho_\cA(x,\xi,\tau):=\rho(\cA (x,\xi),\tau)$ defines another representation of the Heisenberg group that is equivalent to $\rho$, i.e. there exists a unitary operator $\mu(\cA):L^2(\rdd)\to L^2(\rdd)$ such that
\begin{equation}\label{muAdef}
	\mu(\cA)\rho(x,\xi,\tau)\mu(\cA)^{-1}=\rho(\cA(x,\xi),\tau), \qquad  x,\xi\in\rd, \ \tau\in\bR.
\end{equation}
This operator is not unique, but if $\mu'(\cA)$ is another unitary operator satisfying (\ref{muAdef}), then $\mu'(\cA)=c\mu(\cA)$, for some unitary constant $c\in\bC$, $|c|=1$. The set $\{\mu(\cA) : \cA\in Sp(d,\bR)\}$ is a group under composition and it admits a subgroup that contains exactly two operators for each $\cA\in Sp(d,\bR)$. This subgroup is called \textbf{metaplectic group}, denoted by $Mp(d,\bR)$. It is a realization of the two-fold cover of $Sp(d,\bR)$ and the projection \begin{equation}\label{piMp}
	\pi^{Mp}:Mp(d,\bR)\to Sp(d,\bR)
\end{equation} is a group homomorphism with kernel $\ker(\pi^{Mp})=\{-id_{{L^2}},id_{{L^2}}\}$.

\begin{proposition}{\cite[Proposition 4.27]{folland89}}\label{Folland427}
	The operator $\mu(\cA)\in Mp(2d,\bR)$ maps $\cS(\rd)$ isomorphically to $\cS(\rd)$ and it extends to an isomorphism on $\cS'(\rd)$.
\end{proposition}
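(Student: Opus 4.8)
The plan is to reduce the statement to the three elementary generators of the metaplectic group. By the structure result recalled in Subsection~\ref{subsec:26}, the matrix $J$, the shears $V_C$ (with $C=C^T$), and the dilations $\cD_L$ (with $L\in GL(2d,\bR)$) generate $Sp(2d,\bR)$. Fixing $\mu(\cA)\in Mp(2d,\bR)$, I would write $\cA=S_1\cdots S_k$ with each $S_i$ of one of these three types. Since $\pi^{Mp}$ is a group homomorphism, the product $\mu(S_1)\cdots\mu(S_k)$ lies in $Mp(2d,\bR)$ and projects to the same symplectic matrix $\cA$ as $\mu(\cA)$, so the two coincide up to an element of $\ker\pi^{Mp}=\{\pm\Id_{L^2}\}$. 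Because a composition of topological isomorphisms of $\cS(\rdd)$ (resp.\ of $\cS'(\rdd)$) is again one, and multiplication by $\pm 1$ is harmless, it suffices to verify the assertion for each generator type.

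First I would treat the generators on $\cS(\rdd)$. Up to a unimodular constant, $\mu(J)$ is the Fourier transform $\cF$, which is a topological automorphism of $\cS(\rdd)$ (recalled above). Up to a constant, $\mu(V_C)$ is multiplication by the smooth unimodular function $z\mapsto e^{i\pi Cz\cdot z}$, all of whose derivatives are polynomials times itself and hence of polynomial growth; by the Leibniz rule it is continuous on $\cS(\rdd)$, and its inverse is multiplication by $e^{-i\pi Cz\cdot z}$, of the same form. Up to a constant, $\mu(\cD_L)$ is the pullback $f\mapsto f(L^{T}\cdot)$ along an invertible linear map, which is plainly a topological automorphism of $\cS(\rdd)$. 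In each case we have a continuous linear bijection of the Fréchet space $\cS(\rdd)$ onto itself, hence a homeomorphism by the open mapping theorem. Composing, $\mu(\cA)$ maps $\cS(\rdd)$ isomorphically onto $\cS(\rdd)$.

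For the extension to $\cS'(\rdd)$ I would argue by transposition. Since $\mu(\cA)$ is unitary on $L^2(\rdd)$, its $L^2$-adjoint equals its inverse, which is itself a metaplectic operator (associated to $\cA^{-1}$) and hence restricts to a topological automorphism of $\cS(\rdd)$ by the previous step. Therefore the rule $\langle \mu(\cA)f,\varphi\rangle:=\langle f,\mu(\cA)^{-1}\varphi\rangle$, for $\varphi\in\cS(\rdd)$, defines a weak-$\ast$ continuous linear map $\cS'(\rdd)\to\cS'(\rdd)$ that agrees with the $L^2$-operator on the dense subspace $\cS(\rdd)$, and the analogously defined extension of the metaplectic operator associated to $\cA^{-1}$ provides a two-sided inverse. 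Hence $\mu(\cA)$ extends to a topological isomorphism of $\cS'(\rdd)$, consistent with its action on $\cS(\rdd)$ and on $L^2(\rdd)$.

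The only point that requires genuine care is the scalar bookkeeping: the metaplectic correspondence is a projective representation, so the word $S_1\cdots S_k$ determines $\mu(\cA)$ only up to a unimodular factor, and the normalising constants of $\mu(J)$, $\mu(V_C)$, $\mu(\cD_L)$ depend on the chosen Fourier normalisation. None of this affects the conclusion, since a nonzero scalar acts as an automorphism of both $\cS(\rdd)$ and $\cS'(\rdd)$; but one should record explicitly that the generator decomposition exists and that all the constants involved are nonzero before chaining the elementary facts above. There is no deeper obstacle; alternatively, for a \emph{free} $\cA$ one could read the mapping property directly off the explicit quadratic-phase integral kernel of $\mu(\cA)$ and then invoke the fact that every symplectic matrix is a product of two free ones, but the generator argument is shorter.
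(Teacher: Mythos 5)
Your argument is correct and complete. Note, however, that the paper does not prove this statement at all: it is quoted verbatim as \cite[Proposition 4.27]{folland89}, so there is no internal proof to compare with, and the standard of comparison is Folland's own argument. Folland's route is essentially the alternative you mention in passing: for a \emph{free} symplectic matrix the mapping properties are read off the explicit quadratic-phase integral formula (cf.\ \eqref{free-int-rep}), and the general case follows because every symplectic matrix factors as a product of (at most two) free ones. Your main route via the three generators $J$, $V_C$, $\cD_L$ is an equally standard and self-contained alternative, and your scalar bookkeeping (the product $\mu(S_1)\cdots\mu(S_k)$ agrees with $\mu(\cA)$ up to $\pm 1$, which is irrelevant for the isomorphism statement) is handled correctly. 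Two small points worth recording: the duality pairing in this paper is the antilinear one, so your definition $\la \mu(\cA)f,\varphi\ra:=\la f,\mu(\cA)^{-1}\varphi\ra$ is consistent with the $L^2$ action precisely because $\mu(\cA)$ is unitary, as you say; and the operators in $Mp(2d,\bR)$ act on functions on $\rdd$, so the correct reading of the statement (and the one you adopt) is that $\mu(\cA)$ is an automorphism of $\cS(\rdd)$ and of $\cS'(\rdd)$ — the $\rd$ in the quoted statement is a notational slip of the paper, not something your proof needs to accommodate.
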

For $C\in\R^{d\times d}$, define 
\begin{equation}\label{chirp}
	\Phi_C(t)=e^{\pi i t\cdot Ct},\quad t\in\rd.
\end{equation}
If we add the assumptions  $C$ symmetric and invertible, then we can compute explicitly its Fourier transform, that is  
\begin{equation}\label{ft-chirp}
\widehat{\Phi_C}=|\det(C)|\,\Phi_{-C^{-1}}.
\end{equation}

\begin{example}\label{es22} For particular choices of $\cA\in Sp(d,\bR)$, $\mu(\cA)$ is known. Let $J$, $\cD_L$ and $V_C$ be defined as in (\ref{defJ}) and (\ref{defDLVC}), respectively. Then, up to a sign,
	\begin{enumerate}
		\item[\it (i)] $\mu(J)f=\cF f$,
		\item[\it (ii)] $\mu(\cD_L)f=\mathfrak{T}_Lf=|\det(L)|^{1/2}\,f(L\cdot)$,
		\item[\it (iii)]  $\mu(V_C)f=\Phi_C f$,
		\item[\it (iv)] $\mu(V_C^T)f=\widehat{\Phi_{-C}}\ast f=\cF \Phi_{-C} \cF^{-1}$,
		\item[\it (v)] if $\cA_{FT2}\in Sp(2d,\bR)$ is the $4d\times4d$ matrix with block decomposition
		\[
		\cA_{FT2}:=\begin{pmatrix}
			I_{d\times d} & 0_{d\times d} & 0_{d\times d} & 0_{d\times d}\\
			0_{d\times d} & 0_{d\times d} & 0_{d\times d} & I_{d\times d} \\
			0_{d\times d} & 0_{d\times d} & I_{d\times d} & 0_{d\times d}\\
			0_{d\times d} & -I_{d\times d} & 0_{d\times d} & 0_{d\times d}
		\end{pmatrix},
		\]
		then,  $$\mu(\cA_{FT2})F(x,\xi)=\cF_2F(x,\xi), \quad x,\xi\in\rd,$$ the partial Fourier transform w.r.t. the second variable.
		\item[\it (vi)] Assume  $\cA\in Sp(d,\bR)$ with block decomposition (\ref{blocksA}). Then,\\
		if $\cA$ is free:
	\begin{equation}\label{free-int-rep}
		\mu(\cA)f(x)=(\det(B))^{-1/2}\Phi_{-DB^{-1}}(x)\int_{\rd}e^{2\pi i y\cdot B^{-1}x}\Phi_{-B^{-1}A}(y) f(y)dy.
	\end{equation}
		if $\det A\not=0$,
			\begin{equation}\label{Anonsing-int-rep}
		\mu(\cA)f(x)=(\det(A))^{-1/2}\Phi_{-CA^{-1}}(x)\int_{\rd}e^{2\pi i \xi\cdot A^{-1}x}\Phi_{-A^{-1}B}(\xi) \hf(\xi)d\xi.
			\end{equation}
	\end{enumerate}

\end{example}
Other important symplectic matrices are the so-called quasi-permutation matrices \cite{Dopico2009,Fuhr}.
\begin{definition}
	For $1\leq j\leq d$, the symplectic interchange matrix $\Pi_j\in Sp(d,\bR)$ is the  matrix obtained interchanging the columns $j$ and $j+d$ of the $2d$-by-$2d$ identity matrix and multiplying the $j$th column of the resulting matrix by $-1$.
\end{definition}
The corresponding metaplectic operators are the partial Fourier transforms, as we can see below.
\begin{example}\label{symplectic-interchange} Let $\cF_j$, $1\leq j\leq d$, be the partial \ft\, w.r.t. the $jth$ coordinate defined in \eqref{FJ}.	Then
	\begin{equation}\label{partialFT}
		\cF_j\rho(x,\xi,\tau)\cF_j^{-1} =\rho(\Pi_j(x,\xi),\tau),\quad x,\xi\in\rd,\,\tau\in\bR.
	\end{equation}
In fact, take any $f\in L^1(\rd)$ and compute $	\cF_j\rho(x,\xi,\tau)\cF_j^{-1}f$ as follows:
\begin{align*}\label{pij}
\cF_j&\rho(x,\xi,\tau)\cF_j^{-1}f(t_1,\dots,t_d)\notag\\
&=e^{2\pi i\tau} e^{-\pi i x\cdot\xi} e^{2\pi i\sum_{k\not=j} t_k\cdot\xi_k} \int_{\bR} e^{-2\pi t_j\zeta_j }e^{2\pi i \zeta_j\xi_j}\cF_j^{-1} f(t_1-x_1,\dots, \zeta_j-x_j,\dots,t_d-x_d)\,d\zeta_j\\
%&= e^{2\pi i\tau} e^{-\pi i x\cdot\xi} e^{2\pi i\sum_{k\not=j} t_k\cdot\xi_k} \int_{\bR} e^{-2\pi i \zeta_j(t_j-\xi_j)} \cF_j^{-1} f(t_1-x_1,\dots, \zeta_j-x_j,\dots,t_d-x_d)\,d\zeta_j\notag\\
&= e^{2\pi i\tau} e^{-\pi i x\cdot\xi} e^{2\pi i\sum_{k\not=j} t_k\cdot\xi_k} \int_{\bR} e^{-2\pi i (u_j+x_j)(t_j-\xi_j)} \cF_j^{-1} f(t_1-x_1,\dots, u_j,\dots,t_d-x_d)\,du_j\notag\\
&= e^{2\pi i\tau} e^{-\pi i x\cdot\xi} e^{2\pi i\sum_{k\not=j} t_k\cdot\xi_k} e^{2\pi i t_j(-x_j)} e^{2\pi i x_j\xi_j}\int_{\bR} e^{-2\pi u_j(\xi_j-\zeta_j)}\notag \\
&\qquad\qquad\qquad\times\quad\cF_j^{-1} f(t_1-x_1,\dots, u_j,\dots,t_d-x_d)\,du_j\notag\\
&=\rho(\Pi_j(x,\xi),\tau)f(t_1,\dots,t_d).\notag
\end{align*}
Observe also that $\prod_j\Pi_j=J$, in line with (\ref{compFj}).
\end{example}

%The chirp function $\Phi_C$ ($C\in\bR^{d\times d}$) appears ubiquitously in the theory of metaplectic Wigner distributions, as $V_C$ is a generator of $Sp(d,\bR)$. We show that both $\Phi_C$ and $\hat\Phi_C$ belong to $M^\infty(\rd)$.
%
%\begin{lemma}
%	Let $C\in\bR^{d\times d}$. Then, $\Phi_C,\cF(\Phi_C), \cF^{-1}(\Phi_C)\in M^\infty(\rd)$. 
%\end{lemma}
%\begin{proof}
%	Let $x,\xi\in\bR^d$ and $g\in\cS(\rd)\setminus\{0\}$ be a window function. Then,
%	\[
%		|V_g\Phi_C(x,\xi)|\leq\int_{\rd}|\bar g(t-x)|dt=\norm{g}_1<\infty.
%	\]
%	The assertions for $\cF(\Phi_C)$ and $\cF^{-1}(\Phi_C)$ follow analogously using that $|V_{\hat g}\hat f(x,\xi)|=|V_gf(\xi,-x)|$.
%\end{proof}

%We conclude this paragraph recalling a continuity result on modulation spaces of metaplectic operators. We refer to \cite[Theorem 2.13]{CR2021} for its proof.
%
%\begin{theorem}\label{thm213}
%	Assume $s\in\bR$, $\cA\in Sp(d,\bR)$ and $0<p<\infty$. Then, $\mu(\cA):\cS(\rd)\to\cS'(\rd)$ extends to a continuous operator on $M^p_{v_s}(\rd)$ and on $\cM^\infty_{v_s}$.
%\end{theorem}

\section{Shift-invertibility and modulation spaces}\label{sec:SIMS}
In this section we present the features of metaplectic operators that guarantee the representations of modulation and Wiener amalgam spaces by metaplectic operators.
We first need to recall the definition of  metaplectic Wigner distributions and their main properties.
\subsection{Metaplectic Wigner distributions}
Metaplectic Wigner distributions were introduced and developed in \cite{CGR2022,CR2021,CR2022}, see also \cite{Zhang21bis,Zhang21}. The reader may recognize that Definition \ref{def1} below generalizes the STFT and the $\tau$-Wigner distributions (among others).

\begin{definition}\label{def1}
	Let $\mu(\cA)\in Mp(2d,\bR)$. The (cross-)\textbf{metaplectic Wigner distribution} with matrix $\cA$ (or $\cA$-Wigner distribution) is defined  as
	\[	
		W_\cA(f,g)(x,\xi)=\mu(\cA)(f\otimes \bar g)(x,\xi),\quad f,g\in L^2(\rd).
	\]
\end{definition}

The following results are direct consequences of Proposition \ref{Folland427}.

\begin{proposition}\label{prop32}
	Let $\mu(\cA)\in Mp(2d,\bR)$.\\
	(i) $W_\cA:L^2(\rd)\times L^2(\rd)\to L^2(\rdd)$ is continuous.\\
	(ii) $W_\cA:\cS(\rd)\times\cS(\rd)\to \cS(\rdd)$ is continuous.\\
	(iii) $W_\cA:\cS'(\rd)\times\cS'(\rd)\to\cS'(\rdd)$ is continuous.
\end{proposition}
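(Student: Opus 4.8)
The plan is to write $W_\cA$ as a composition $W_\cA = \mu(\cA)\circ T$, where $T$ is the conjugate-bilinear tensor map $T(f,g) = f\otimes\bar g$, and then dispatch the three cases by checking that $T$ is continuous into the relevant function/distribution space over $\rdd$ and that $\mu(\cA)$ is continuous on that same space. The second ingredient is already in hand: $\mu(\cA)$ is unitary on $L^2(\rdd)$ by construction, and by Proposition \ref{Folland427} (applied in dimension $2d$) it restricts to a topological isomorphism of $\cS(\rdd)$ and extends to a topological isomorphism of $\cS'(\rdd)$. Moreover, conjugation $g\mapsto\bar g$ is a conjugate-linear homeomorphism of each of $L^2(\rd)$, $\cS(\rd)$, $\cS'(\rd)$, so it may be absorbed into $T$ without affecting any of the continuity statements.

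For $(i)$: by Fubini, $\norm{f\otimes h}_{L^2(\rdd)} = \norm{f}_{L^2(\rd)}\norm{h}_{L^2(\rd)}$, so $T$ is a bounded bilinear map $L^2(\rd)\times L^2(\rd)\to L^2(\rdd)$; composing with the isometry $\mu(\cA)$ gives $\norm{W_\cA(f,g)}_{L^2(\rdd)} = \norm{f}_{L^2(\rd)}\norm{g}_{L^2(\rd)}$, hence joint continuity. For $(ii)$: a routine Leibniz-rule computation bounds the standard Schwartz seminorms by $p_N(f\otimes h)\lesssim p_N(f)\,p_N(h)$, using $\partial_x^\beta\partial_y^\gamma(f\otimes h) = (\partial^\beta f)\otimes(\partial^\gamma h)$ and $(1+\abs{(x,y)})^N\lesssim (1+\abs{x})^N(1+\abs{y})^N$; thus $T:\cS(\rd)\times\cS(\rd)\to\cS(\rdd)$ is (jointly) continuous, and composing with the isomorphism $\mu(\cA)$ of $\cS(\rdd)$ from Proposition \ref{Folland427} yields the claim.

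For $(iii)$: as recalled in Section \ref{sec:preliminaries}, the tensor product extends to a well-defined map $\cS'(\rd)\times\cS'(\rd)\to\cS'(\rdd)$, and this map is separately continuous (equivalently, hypocontinuous) for the standard topologies on these spaces, a classical fact from the theory of tempered distributions and nuclear spaces. Composing with the continuous operator $\mu(\cA):\cS'(\rdd)\to\cS'(\rdd)$ of Proposition \ref{Folland427} then gives the continuity of $W_\cA$ on $\cS'(\rd)\times\cS'(\rd)$.

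The only point requiring genuine care is $(iii)$: one must be explicit about which topology $\cS'$ carries and in what sense a bilinear map into a distribution space is ``continuous''. I would phrase the assertion as separate continuity, which is all that is used later; if a self-contained argument is preferred, sequential continuity suffices and can be checked directly by pairing against $\varphi\otimes\psi\in\cS(\rdd)$ and invoking the density statements of Section \ref{sec:preliminaries}. The remaining computations in $(i)$ and $(ii)$ are entirely standard and would only be sketched.
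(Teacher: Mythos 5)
Your argument is correct and is essentially the paper's own (implicit) proof: the paper simply notes that these statements are direct consequences of Proposition \ref{Folland427}, i.e.\ of writing $W_\cA(f,g)=\mu(\cA)(f\otimes\bar g)$ and using the continuity of the tensor map together with the unitarity of $\mu(\cA)$ on $L^2(\rdd)$ and its isomorphism property on $\cS(\rdd)$ and $\cS'(\rdd)$. Your extra care in $(iii)$ about phrasing the bilinear continuity on $\cS'(\rd)\times\cS'(\rd)$ as separate (hypo)continuity is a reasonable clarification, not a deviation.
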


We observe that \cite[Theorem 2.7]{CR2022} and \cite[Proposition 2.18]{CR2022} extend to  tempered distributions.

\begin{corollary}\label{cor3.3}
	Let $\mu(\cA)\in Mp(2d,\bR)$. Consider $g_1\in\cS'(\rd)$, $g_2\in\cS(\rd)$ with $\langle g_1,g_2\rangle\neq0$. Then, for all $f\in \cS'(\rd)$,
	\begin{equation}\label{invFor}
		f(x)=\frac{1}{\langle g_2,g_1\rangle}\int_{\rd}\mu(\cA)^{-1}W_\cA(f,g_1)(x,\xi)g_2(\xi)d\xi,
	\end{equation}
	with equality in $\cS'(\rd)$, the integral being meant in the weak sense.
\end{corollary}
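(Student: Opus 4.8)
The plan is to deduce the inversion formula from the reproducing formula for the STFT together with the known inversion formula for metaplectic Wigner distributions on $L^2$ (i.e. \cite[Theorem 2.7]{CR2022}), upgrading the hypotheses on the windows from $L^2$ or $\cS$ to $\cS'(\rd)\times\cS(\rd)$. First I would recall that on $L^2(\rd)\times L^2(\rd)$ the identity \eqref{invFor} is precisely the content of \cite[Theorem 2.7]{CR2022}: applying $\mu(\cA)^{-1}$ to $W_\cA(f,g_1)=\mu(\cA)(f\otimes\bar g_1)$ recovers the tensor product $f\otimes\bar g_1$, and integrating the resulting function of $(x,\xi)$ against $g_2(\xi)$ in the second variable collapses it to $\langle\bar g_1,\bar g_2\rangle\, f = \langle g_2,g_1\rangle\, f$; division by $\langle g_2,g_1\rangle$ finishes the $L^2$ case. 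So the only real content of the corollary is the extension to $f\in\cS'(\rd)$ and $g_1\in\cS'(\rd)$.

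The extension I would carry out by duality/density. Fix $g_1\in\cS'(\rd)$, $g_2\in\cS(\rd)$ with $\langle g_1,g_2\rangle\neq0$. By Proposition \ref{prop32}(iii), $W_\cA(f,g_1)\in\cS'(\rdd)$, and by Proposition \ref{Folland427} (via the comment preceding it, applied to $\mu(\cA)^{-1}=\mu(\cA^{-1})$ up to sign) $\mu(\cA)^{-1}W_\cA(f,g_1)\in\cS'(\rdd)$; pairing this with $g_2(\xi)$ in the second variable and testing against an arbitrary $\varphi\in\cS(\rd)$ in the first variable produces a well-defined number, which is what gives meaning to the right-hand side of \eqref{invFor} as a weak (vector-valued) integral defining an element of $\cS'(\rd)$. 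Concretely, for $\varphi\in\cS(\rd)$ I would compute
\[
	\Big\langle \int_{\rd}\mu(\cA)^{-1}W_\cA(f,g_1)(\cdot,\xi)g_2(\xi)\,d\xi,\ \varphi\Big\rangle
	=\big\langle \mu(\cA)^{-1}W_\cA(f,g_1),\ \varphi\otimes g_2\big\rangle
	=\big\langle f\otimes\bar g_1,\ \mu(\cA)(\varphi\otimes g_2)\big\rangle,
\]
where the last step uses that $\mu(\cA)^{-1}$ is (up to sign) the metaplectic operator of $\cA^{-1}$ and hence its transpose on $\cS'$ is $\mu(\cA)$ on $\cS$ — this is where Proposition \ref{Folland427} is used. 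It then remains to identify this with $\langle g_2,g_1\rangle\langle f,\varphi\rangle$. For $f,\varphi\in\cS(\rd)$ and $g_1\in\cS(\rd)$ this is exactly the $L^2$-case computation above (everything is Schwartz, all integrals converge absolutely, Fubini applies), giving $\langle f\otimes\bar g_1,\mu(\cA)(\varphi\otimes g_2)\rangle=\langle g_2,g_1\rangle\langle f,\varphi\rangle$. Since both sides of this scalar identity are separately continuous in $f\in\cS'(\rd)$ (with $\varphi,g_2$ fixed in $\cS$, and $\mu(\cA)(\varphi\otimes g_2)\in\cS(\rdd)$ fixed) and in $g_1\in\cS'(\rd)$ (with $f,\varphi,g_2$ fixed), and $\cS(\rd)$ is sequentially dense in $\cS'(\rd)$, the identity persists for all $f\in\cS'(\rd)$ and all $g_1\in\cS'(\rd)$. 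Dividing by $\langle g_2,g_1\rangle$ and recalling that $\varphi\in\cS(\rd)$ was arbitrary yields \eqref{invFor} in $\cS'(\rd)$.

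The main obstacle I anticipate is purely bookkeeping rather than conceptual: one must make precise the vector-valued weak integral $\int_{\rd}\mu(\cA)^{-1}W_\cA(f,g_1)(x,\xi)g_2(\xi)\,d\xi$ — namely that it is to be interpreted exactly via the pairing $\langle\,\cdot\,,\varphi\otimes g_2\rangle$ displayed above, so that no genuine integrability question arises — and one must justify the transpose relation $\langle\mu(\cA)^{-1}H,\Psi\rangle=\langle H,\mu(\cA)\Psi\rangle$ for $H\in\cS'(\rdd)$, $\Psi\in\cS(\rdd)$ with the correct placement of complex conjugates given the conjugate-linearity convention for $\langle\cdot,\cdot\rangle$; since metaplectic operators are unitary on $L^2$ this holds on $L^2$ and extends to the $\cS'$–$\cS$ duality by Proposition \ref{Folland427}. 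Once these two points are nailed down, the density argument is routine and the corollary follows.
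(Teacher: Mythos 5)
Your plan --- give the right-hand side of \eqref{invFor} a weak meaning by pairing with a test function in the first variable and with the window in the second, transpose $\mu(\cA)^{-1}$ onto the test side, and reduce everything to a scalar identity --- is essentially the route the paper takes (its proof simply transfers \cite[Theorem 2.7]{CR2022} to the $\cS'$ setting), but your central displayed identity is incorrect, and the defect is not just the conjugation issue you flag. After transposing, the first slot must still contain $W_\cA(f,g_1)=\mu(\cA)(f\otimes\bar g_1)$, not $f\otimes\bar g_1$; the correct chain is
\begin{align*}
\big\langle \mu(\cA)^{-1}W_\cA(f,g_1),\,\varphi\otimes\bar g_2\big\rangle
&=\big\langle W_\cA(f,g_1),\,\mu(\cA)(\varphi\otimes\bar g_2)\big\rangle
=\big\langle \mu(\cA)(f\otimes\bar g_1),\,\mu(\cA)(\varphi\otimes\bar g_2)\big\rangle\\
&=\big\langle f\otimes\bar g_1,\,\varphi\otimes\bar g_2\big\rangle
=\langle f,\varphi\rangle\,\langle g_2,g_1\rangle.
\end{align*}
Your version keeps a stray $\mu(\cA)$: the identity $\langle f\otimes\bar g_1,\mu(\cA)(\varphi\otimes g_2)\rangle=\langle g_2,g_1\rangle\langle f,\varphi\rangle$, which you propose to verify by Fubini for Schwartz entries, is false in general. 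Already for $\mu(\cA)=\cF$ on $\rdd$ (i.e.\ $\cA=J$) the left-hand side becomes $\langle f,\hat\varphi\rangle\,\langle \bar g_1,\hat g_2\rangle$ up to conjugations, which is not $\langle f,\varphi\rangle\langle g_2,g_1\rangle$; so the Schwartz-case computation you appeal to would not produce that identity, and the subsequent density argument would only propagate a false statement.

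Once this is repaired, your argument not only works but simplifies. The cancellation $\langle\mu(\cA)H,\mu(\cA)\Psi\rangle=\langle H,\Psi\rangle$ for $H\in\cS'(\rdd)$, $\Psi\in\cS(\rdd)$ is immediate from the duality definition of the $\cS'$-extension of the unitary operator $\mu(\cA)$ (Proposition \ref{Folland427}); equivalently, $\mu(\cA)^{-1}W_\cA(f,g_1)=f\otimes\bar g_1$ holds directly in $\cS'(\rdd)$ by Proposition \ref{prop32}. Hence the scalar identity above is valid at once for all $f,g_1\in\cS'(\rd)$, and no density or separate-continuity argument in $f$ or $g_1$ is needed. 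Two bookkeeping points remain as you anticipated: the weak integral must be paired with $\varphi\otimes\bar g_2$ (not $\varphi\otimes g_2$) under the antilinear convention, which is exactly what makes the constant come out as $\langle g_2,g_1\rangle$, and the sign ambiguity $\mu(\cA^{-1})=\pm\mu(\cA)^{-1}$ is harmless since $\mu(\cA)^{-1}$ itself is used throughout. With these corrections your proof coincides with the paper's.
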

\begin{proof}
	It is the same as in \cite[Theorem 2.7]{CR2022}. 
\end{proof}

\begin{corollary}\label{cor218}
Fix $g_3\in\cS(\rd)\setminus\{0\}$.	Under the assumptions of Corollary \ref{cor3.3}, for any $f\in\cS'(\rd)$,
	\[
		V_{g_3}f(w)=\frac{1}{\langle g_2,g_1\rangle}\langle W_\cA(f,g_2),W_\cA(\pi(w)g_3,g_2) \rangle, \qquad w\in\rdd,
	\]
	where $\la \cdot,\cdot\ra$ is the antilinear duality paring between $\cS'(\rdd)$ and $\cS(\rdd)$.
\end{corollary}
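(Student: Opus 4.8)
The plan is to derive the identity by applying the reproducing/inversion formula of Corollary \ref{cor3.3} inside the STFT and then recognizing the resulting expression as a pairing of metaplectic Wigner distributions. First I would start from the definition $V_{g_3}f(w) = \langle f, \pi(w)g_3\rangle$ for $w\in\rdd$, valid by antilinear duality since $f\in\cS'(\rd)$ and $\pi(w)g_3\in\cS(\rd)$ (recall $\mu(\cA)$ and time-frequency shifts preserve $\cS$). Then I would substitute for $f$ the weak-sense representation
\[
	f(x)=\frac{1}{\langle g_2,g_1\rangle}\int_{\rd}\mu(\cA)^{-1}W_\cA(f,g_1)(x,\xi)\,g_2(\xi)\,d\xi
\]
furnished by Corollary \ref{cor3.3}, keeping in mind that $W_\cA(f,g_1)=\mu(\cA)(f\otimes\bar g_1)\in\cS'(\rdd)$ by Proposition \ref{prop32}(iii). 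Applying $\langle\cdot,\pi(w)g_3\rangle$ and moving the duality pairing inside the weak integral gives
\[
	V_{g_3}f(w)=\frac{1}{\langle g_2,g_1\rangle}\int_{\rd}\big\langle \mu(\cA)^{-1}W_\cA(f,g_1)(\cdot,\xi),\pi(w)g_3\big\rangle\,g_2(\xi)\,d\xi.
\]

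Next I would reorganize this as a single pairing over $\rdd$. The integral against $g_2(\xi)\,d\xi$ together with the inner pairing in the first variable combine into the $\cS'(\rdd)$--$\cS(\rdd)$ pairing of $\mu(\cA)^{-1}W_\cA(f,g_1)$ with the tensor $(\pi(w)g_3)\otimes \bar g_2$; more precisely one gets
\[
	V_{g_3}f(w)=\frac{1}{\langle g_2,g_1\rangle}\big\langle \mu(\cA)^{-1}W_\cA(f,g_1),\,(\pi(w)g_3)\otimes\bar g_2\big\rangle.
\]
Here I use that $g_2\in\cS(\rd)$ and $\pi(w)g_3\in\cS(\rd)$, so their tensor product lies in $\cS(\rdd)$ and the pairing is legitimate. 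Then, since $\mu(\cA)$ is unitary on $L^2(\rdd)$ and extends to a topological isomorphism of $\cS'(\rdd)$ with adjoint/transpose behaving as $\langle \mu(\cA)^{-1}F,\Psi\rangle=\langle F,\mu(\cA)\Psi\rangle$ on the dual pairing (care with the conjugate-linearity convention: $\mu(\cA)^{-1}$ is unitary, so its adjoint equals $\mu(\cA)$), I would transfer $\mu(\cA)^{-1}$ onto the Schwartz argument:
\[
	V_{g_3}f(w)=\frac{1}{\langle g_2,g_1\rangle}\big\langle W_\cA(f,g_1),\,\mu(\cA)\big((\pi(w)g_3)\otimes\bar g_2\big)\big\rangle.
\]
Finally, recognizing $\mu(\cA)\big((\pi(w)g_3)\otimes\bar g_2\big)=W_\cA(\pi(w)g_3,g_2)$ directly from Definition \ref{def1} yields the claimed formula (with $g_1$ in the role of the statement's window; note the statement uses $g_2$ in both slots of $W_\cA(f,\cdot)$, so one should take $g_1=g_2$ at the end, which is allowed since the hypothesis $\langle g_1,g_2\rangle\neq 0$ becomes $\langle g_2,g_2\rangle=\|g_2\|^2\neq 0$).

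The main obstacle I anticipate is the careful justification of interchanging the $\cS'$--$\cS$ duality pairing with the weak-sense vector-valued integral in Corollary \ref{cor3.3}, and keeping the conjugate-linearity conventions consistent throughout (the paper's bracket $\langle\cdot,\cdot\rangle$ is conjugate-linear in the second slot, which affects where bars and which of $\mu(\cA)$ versus $\mu(\cA)^{-1}$ appear when transferring operators across the pairing). This is precisely the kind of bookkeeping already handled in \cite[Theorem 2.7, Proposition 2.18]{CR2022}; since Corollary \ref{cor3.3} asserts that argument goes through for $f\in\cS'(\rd)$, the cleanest route is to mirror that proof verbatim and simply note that every step — the reproducing formula, the unitarity/isomorphism properties of $\mu(\cA)$ from Proposition \ref{Folland427}, and the continuity of $W_\cA$ on tempered distributions from Proposition \ref{prop32}(iii) — remains valid. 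Everything else is routine rearrangement, so I would present the computation compactly and refer to \cite{CR2022} for the measure-theoretic details of the weak integral.
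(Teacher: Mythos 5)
Your argument is correct and is essentially the argument the paper delegates to \cite[Proposition 2.18]{CR2022}: you substitute the inversion formula of Corollary \ref{cor3.3}, interpret the weak integral as the $\cS'(\rdd)$--$\cS(\rdd)$ pairing against $(\pi(w)g_3)\otimes\bar g_2$, and move $\mu(\cA)^{-1}$ across the sesquilinear pairing by unitarity, invoking Proposition \ref{prop32} (ii)--(iii) exactly where the paper says they are needed. Your closing observation is also the right reading of the statement: the formula you actually derive, with $W_\cA(f,g_1)$ in the first slot and constant $1/\langle g_2,g_1\rangle$, is the intended one (it is precisely the version applied in the proof of Theorem \ref{thmF} with $g_3=g_1=g$, $g_2=\gamma$), the printed $W_\cA(f,g_2)$ being a misprint that is literally correct only when $g_1=g_2$.
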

\begin{proof}
	It goes as in \cite[Proposition 2.18]{CR2022}, using Proposition \ref{prop32} $(ii)$ and $(iii)$.
\end{proof}

If $\mu(\cA)\in Mp(2d,\bR)$ with $\cA\in Sp(2d,\bR)$ having block decomposition
\begin{equation}\label{blockA}
	\cA=\begin{pmatrix}
		A_{11} & A_{12} & A_{13} & A_{14}\\
		A_{21} & A_{22} & A_{23} & A_{24}\\
		A_{31} & A_{32} & A_{33} & A_{34}\\
		A_{41} & A_{42} & A_{43} & A_{44}
	\end{pmatrix},
\end{equation}
then it was shown in \cite{CR2022} the equality
\begin{equation}\label{idSI}
	W_\cA(\pi(w)f,g)=\pi(E_\cA w,F_\cA w)W_\cA(f,g),\quad w\in\rdd,\quad\forall \,f,g\in L^2(\rd),
\end{equation}
 where the matrices $E_\cA$ and $F_\cA$ are given by
\begin{equation}\label{defEA}
	E_\cA=\begin{pmatrix}
		A_{11} & A_{13}\\
		A_{21} & A_{23}
	\end{pmatrix} \qquad and \qquad F_\cA=\begin{pmatrix}
		A_{31} & A_{33}\\
		A_{41} & A_{43}
	\end{pmatrix}.
\end{equation} 

\begin{definition}
	Under the notation above, we say that $W_\cA$ (or, by abuse, $\cA$) is \textbf{shift-invertible} if $E_\cA\in GL(2d,\bR)$.
\end{definition}

%We observe that \cite[Proposition 2.17]{CR2022} holds also for $\cS'(\rd)\times \cS(\rd)$. First, Moyal's formula \cite[Proposition 2.9]{CR2022} extends to $\cS'(\rd)\times\cS(\rd)$ trivially. Namely,
%
%\begin{proposition}\label{prop29}
%	Let $\cA\in Sp(2d,\bR)$, $f_1,f_2\in\cS'(\rd)$ and $g_1,g_2\in\cS(\rd)$. Then, 
%	\[
%		\langle W_\cA(f_1,g_1),W_\cA(f_2,g_2\rangle =\langle f_1,f_2\rangle\overline{\langle g_1,g_2\rangle}.
%	\]
%\end{proposition} 
%
%\begin{proposition}\label{prop218}
%	Let $f,g_1\in \cS'(\rd)$ and $g_2,g_3\in\cS(\rd)$,
%	\[
%		V_{g_3}f(w)=\frac{1}{\overline{\langle g_1,g_2 \rangle}}\langle W_\cA(f,g_1),W_\cA(\pi(w)g_3,g_2) \rangle \qquad (w\in\rdd).
%	\]
%\end{proposition}
%\begin{proof}
%	It follows by Proposition \ref{prop29}. Indeed, for all $w\in\rdd$
%	\begin{align*}
%		\overline{\langle g_1,g_2 \rangle}V_{g_3}f(w)=\overline{\langle g_1,g_2 \rangle}\langle f,g_3\rangle =\langle W_\cA(f,g_1),W_\cA(g_3,g_2)\rangle.
%	\end{align*}
%\end{proof}

We need the following representation formula.

\begin{lemma}\label{intertFormula}
	Let $\mu(\cA)\in {Mp}(2d,\bR)$, $\gamma,g\in\cS(\rd)$ be such that $\langle \gamma,g\rangle\neq0$ and $f\in\cS'(\rd)$. Then,
	\begin{equation}
		W_\cA(f,g)=\frac{1}{\langle\gamma, g\rangle}\int_{\rdd}V_gf(w)W_\cA(\pi(w)\gamma,g) dw
	\end{equation}
	with equality in $\cS'(\rdd)$, the integral being intended in the weak sense.
\end{lemma}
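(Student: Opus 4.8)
The plan is to derive the formula by applying the continuous linear operator $h\mapsto W_\cA(h,g)=\mu(\cA)(h\otimes\bar g)$ to the reproducing formula for the STFT. First I would recall that formula in its extension to tempered distributions: for $f\in\cS'(\rd)$ and $\gamma,g\in\cS(\rd)$ with $\langle\gamma,g\rangle\neq0$,
\[
f=\frac{1}{\langle\gamma,g\rangle}\int_{\rdd}V_gf(w)\,\pi(w)\gamma\,dw ,
\]
where the right-hand side is a vector-valued integral understood weakly in $\cS'(\rd)$, i.e.\ $\langle f,\varphi\rangle=\frac{1}{\langle\gamma,g\rangle}\int_{\rdd}V_gf(w)\,\langle\pi(w)\gamma,\varphi\rangle\,dw$ for all $\varphi\in\cS(\rd)$ (cf.\ the reproducing formula recalled in Section \ref{subsec:23}; the integral converges absolutely because $V_gf$ grows at most polynomially while $w\mapsto\langle\pi(w)\gamma,\varphi\rangle$ is Schwartz in $w$).

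Next I would apply the map $T\colon\cS'(\rd)\to\cS'(\rdd)$, $Th:=W_\cA(h,g)$. It is linear and continuous, being the composition of $h\mapsto h\otimes\bar g$ (continuous from $\cS'(\rd)$ to $\cS'(\rdd)$ for fixed $g\in\cS(\rd)$) with $\mu(\cA)$ (continuous on $\cS'(\rdd)$ by Proposition \ref{Folland427}, cf.\ Proposition \ref{prop32}(iii)). Therefore $T$ has a continuous transpose $T'\colon\cS(\rdd)\to\cS(\rd)$, and for every $\Psi\in\cS(\rdd)$, pairing the displayed inversion formula with $T'\Psi$ and using the weak-sense definition of the integral yields
\[
\langle W_\cA(f,g),\Psi\rangle=\langle f,T'\Psi\rangle=\frac{1}{\langle\gamma,g\rangle}\int_{\rdd}V_gf(w)\,\langle W_\cA(\pi(w)\gamma,g),\Psi\rangle\,dw ,
\]
where in the last step I used $\langle\pi(w)\gamma,T'\Psi\rangle=\langle T\pi(w)\gamma,\Psi\rangle=\langle W_\cA(\pi(w)\gamma,g),\Psi\rangle$. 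This is exactly the claimed identity, read weakly in $\cS'(\rdd)$.

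The only step requiring real care — and the main obstacle — is to ensure that the last integral converges absolutely for each fixed $\Psi\in\cS(\rdd)$, which is what makes the weak integral in the statement well defined and legitimizes interchanging $T$ with the integral. For this I would use \eqref{idSI}: $W_\cA(\pi(w)\gamma,g)=\pi(E_\cA w,F_\cA w)W_\cA(\gamma,g)$ with $W_\cA(\gamma,g)\in\cS(\rdd)$ by Proposition \ref{prop32}(ii). Since the $4d\times 2d$ matrix with block rows $E_\cA$ and $F_\cA$ consists of linearly independent columns of the invertible matrix $\cA$ (see \eqref{defEA}), the linear map $w\mapsto(E_\cA w,F_\cA w)$ is injective, so $w\mapsto\langle\pi(E_\cA w,F_\cA w)W_\cA(\gamma,g),\Psi\rangle$ decays faster than any polynomial; combined with the polynomial growth of $V_gf$ this gives absolute convergence, and Fubini justifies all the manipulations above. (Alternatively one may establish the identity first for $f\in\cS(\rd)$, where it is classical, and then pass to $f\in\cS'(\rd)$ by density and the continuity statements of Proposition \ref{prop32}, but the duality argument is shorter.)
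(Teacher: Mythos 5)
Your argument is correct and follows essentially the same route as the paper's proof: both pair $W_\cA(f,g)$ with a test function, transfer $\mu(\cA)$ across the antilinear pairing (your transpose $T'$ plays the role of the paper's $\mu(\cA)^{-1}$ together with the partial pairing against $\bar g$), insert the weak-sense STFT inversion formula, and interchange pairing and integral. Your additional convergence check via \eqref{idSI} and the injectivity of $w\mapsto(E_\cA w,F_\cA w)$ is sound but not strictly needed, since $\langle W_\cA(\pi(w)\gamma,g),\Psi\rangle=\langle\pi(w)\gamma,T'\Psi\rangle$ is already rapidly decaying in $w$, being (up to conjugation) the STFT of the Schwartz function $T'\Psi$ with Schwartz window $\gamma$.
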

\begin{proof}
	 Take any $\f\in\cS(\rdd)$ and use the definition of vector-valued integral in a weak sense which entails
	\begin{align*}
	\la W_\cA(f,g)(z), \f\ra &= \la f\otimes \bar g, \mu(\cA)^{-1}\f\ra\\
	&= \la \frac{1}{\langle\gamma,g\rangle}\int_{\rdd}V_gf(w)[(\pi(w)\gamma)\otimes \bar g ] dw, \mu(\cA)^{-1}\f\ra\\
	&=	 \frac{1}{\langle\gamma,g\rangle}\int_{\rdd}V_gf(w)\la [(\pi(w)\gamma)\otimes \bar g ], \mu(\cA)^{-1}\f\ra dw\\
	&=\frac{1}{\langle\gamma,g\rangle}\int_{\rdd}V_gf(w)\la \mu(\cA)(\pi(w)\gamma\otimes \bar g),\f\ra dw\\
		&=\frac{1}{\langle\gamma,g\rangle}\int_{\rdd}V_gf(w)\la W_\cA(\pi(w)\gamma,g)(z),\f\ra dw.
	\end{align*}
	Therefore,
	\begin{align*}
		 W_\cA(f,g)=\frac{1}{\langle\gamma,g\rangle}\int_{\rdd}V_gf(w) W_\cA(\pi(w)\gamma,g)dw
		\end{align*}
	with equality in $\cS'(\rdd)$. 
\end{proof}

\begin{theorem} \label{thmF}
	Let $W_\cA$ be shift-invertible with $E_\cA$ upper-triangular.  Fix a non-zero window function $g\in \cS(\rd)$. For $m\in\mathcal{M}_v(\rdd)$ with $m\asymp m\circ E_\cA^{-1}$, $1\leq p,q\leq\infty$,  
\begin{equation}\label{charmod}
		f\in M^{p,q}_m(\rd) \qquad \Leftrightarrow \qquad W_\cA(f,g)\in L^{p,q}_m(\rdd),
\end{equation}
	with equivalence of norms.
\end{theorem}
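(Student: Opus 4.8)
The plan is to prove the two implications in \eqref{charmod} by passing through the standard STFT characterization of $M^{p,q}_m(\rd)$ and using the representation formula of Lemma \ref{intertFormula} together with the shift-covariance identity \eqref{idSI}. The key observation is that $\norm{W_\cA(f,g)}_{L^{p,q}_m}$ can be sandwiched between constant multiples of $\norm{V_gf}_{L^{p,q}_m}$ (equivalently $\norm{f}_{M^{p,q}_m}$), once we control how $W_\cA$ transforms under time-frequency shifts and how the weight $m$ behaves under the linear change of variables induced by $E_\cA$.

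First I would establish the easy direction, $f\in M^{p,q}_m(\rd)\Rightarrow W_\cA(f,g)\in L^{p,q}_m(\rdd)$. Starting from Lemma \ref{intertFormula} with $\gamma=g$ (normalized so $\langle g,g\rangle=1$), we have $W_\cA(f,g)=\int_{\rdd}V_gf(w)\,W_\cA(\pi(w)g,g)\,dw$. Applying \eqref{idSI}, $W_\cA(\pi(w)g,g)=\pi(E_\cA w,F_\cA w)W_\cA(g,g)$, so that $|W_\cA(\pi(w)g,g)|=|W_\cA(g,g)(\,\cdot-E_\cA w)|$. Since $g\in\cS(\rd)$, Proposition \ref{prop32}(ii) gives $W_\cA(g,g)\in\cS(\rdd)\subset L^1_v(\rdd)$, and in particular $|W_\cA(g,g)|$ is dominated by some $\Psi\in L^1_v(\rdd)$. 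Then
\[
|W_\cA(f,g)(z)|\le \int_{\rdd}|V_gf(w)|\,\Psi(z-E_\cA w)\,dw.
\]
Changing variables $w\mapsto E_\cA^{-1}w$ (legitimate since $E_\cA\in GL(2d,\bR)$, with Jacobian $|\det E_\cA|$) turns the right side into a convolution $\big(|V_gf|\circ E_\cA^{-1}\big)\ast \Psi$ evaluated at $z$, up to the constant $|\det E_\cA|^{-1}$. Using $m\asymp m\circ E_\cA^{-1}$ and the fact that $\norm{F\circ E_\cA^{-1}}_{L^{p,q}_m}\asymp \norm{F}_{L^{p,q}_{m\circ E_\cA}}=\norm{F}_{L^{p,q}_{m}}$ — which is exactly where upper-triangularity of $E_\cA$ is needed, via the mixed-norm dilation estimate of Appendix A (cf. \cite{Fuhr}) — together with the convolution relation $L^{p,q}_m\ast L^1_v\hookrightarrow L^{p,q}_m$ recalled in Section \ref{sec:preliminaries}, we obtain $\norm{W_\cA(f,g)}_{L^{p,q}_m}\lesssim \norm{V_gf}_{L^{p,q}_m}=\norm{f}_{M^{p,q}_m}$.

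For the converse, the strategy is to recover $V_{g_3}f$ from $W_\cA(f,g)$. Here I would use Corollary \ref{cor218}: for a fixed Schwartz window $g_3$,
\[
V_{g_3}f(w)=\frac{1}{\langle g,g\rangle}\langle W_\cA(f,g),W_\cA(\pi(w)g_3,g)\rangle,
\]
and again \eqref{idSI} gives $|W_\cA(\pi(w)g_3,g)|=|\Theta(\,\cdot-E_\cA w)|$ with $\Theta:=W_\cA(g_3,g)\in\cS(\rdd)$. Estimating the pairing by $|V_{g_3}f(w)|\le |\langle g,g\rangle|^{-1}\int_{\rdd}|W_\cA(f,g)(z)|\,|\Theta(z-E_\cA w)|\,dz$ and, as before, substituting and recognizing a convolution, we get $|V_{g_3}f|\lesssim \big(|W_\cA(f,g)|\ast \widetilde\Theta\big)\circ E_\cA$ for suitable $\widetilde\Theta\in L^1_v$; invoking once more the dilation bound (upper-triangularity of $E_\cA$), the weight equivalence $m\asymp m\circ E_\cA^{-1}$, and $L^{p,q}_m\ast L^1_v\hookrightarrow L^{p,q}_m$, we conclude $\norm{f}_{M^{p,q}_m}=\norm{V_{g_3}f}_{L^{p,q}_m}\lesssim \norm{W_\cA(f,g)}_{L^{p,q}_m}$. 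One also has to check a posteriori that when the right-hand side of \eqref{charmod} is finite the pairings above are well defined — this follows by a density argument (approximating $f\in\cS'(\rd)$ by Schwartz functions in the weak-$\ast$ sense, or using that $\cM^{p,q}_m$ is well understood and then treating $p$ or $q=\infty$ separately).

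The main obstacle I anticipate is the mixed-norm dilation estimate: unlike the unweighted $L^p$ case, the map $F\mapsto F\circ E_\cA$ is bounded on $L^{p,q}_m(\rdd)$ only when $E_\cA$ respects the product structure $\rdd=\rd\times\rd$ underlying the mixed norm, which forces the upper-triangular (block) structure of $E_\cA$; a general invertible $E_\cA$ would mix the two groups of variables and destroy the $L^{p,q}$ bound when $p\neq q$. This is precisely the point isolated in \cite{Fuhr} and extended in Appendix A, and it is the technical heart that makes upper-triangularity indispensable rather than a convenience. Everything else — the convolution relations, the Schwartz-class membership of $W_\cA(g,g)$ and $W_\cA(g_3,g)$, and the weak-sense manipulations — is routine given the tools already assembled in Section \ref{sec:preliminaries} and the corollaries above.
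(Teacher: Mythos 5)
Your proposal is correct and follows essentially the same route as the paper's proof: the forward implication via Lemma \ref{intertFormula}, the covariance identity \eqref{idSI}, the change of variables giving $|V_gf\circ E_\cA^{-1}|\ast|W_\cA(\gamma,g)|$, and the weighted mixed-norm dilation bounds of Appendix A combined with Young's inequality; the converse via Corollary \ref{cor218} and the same dilation/convolution machinery applied with $S=E_\cA$. The only differences are cosmetic (you fix $\gamma=g$ and a separate analysis window $g_3$, while the paper keeps a general $\gamma\in\cS(\rd)$ with $\la\gamma,g\ra\neq0$ and takes $g_3=g$), so there is nothing substantive to add.
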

\begin{proof}
	$\Rightarrow$. Assume   $f\in M^{p,q}_m(\rd)$. For any $\gamma\in\cS(\rd)$ such that $\la \gamma,g\ra \not=0$, the inversion formula for the STFT (cf. Theorem 2.3.7 in \cite{Elena-book}) reads
	\[
		f=\frac{1}{\langle \gamma,g\rangle}\int_{\rdd}V_gf(w)\pi(w)\gamma dw.
	\]
Multiplying both sides of the above equality by $\bar g(z_2)$, for any $z=(z_1,z_2)\in\rdd$, we can write
	\begin{align*}
		f(z_1)\bar g(z_2)=(f\otimes \bar g)(z)&=\frac{1}{\langle\gamma,g\rangle}\int_{\rdd}V_gf(w)[(\pi(w)\gamma)\otimes \bar g ](z)dw.
	\end{align*}
	Applying $\mu(\cA)$ to $(f\otimes \bar g)$ we obtain $\mu(\cA)(f\otimes \bar g)=W_{\cA}(f,g)\in\cS'(\rdd)$. Using Lemma \ref{intertFormula}, we get:
	\[
		W_\cA(f,g)=\frac{1}{\langle\gamma, g\rangle}\int_{\rdd}V_gf(w)W_\cA(\pi(w)\gamma,g) dw,
	\]
	with equality holding in $\cS'(\rdd)$.
%	Applying $\mu(\cA)$ to $(f\otimes \bar g)$ we obtain $\mu(\cA)(f\otimes \bar g)=W_{\cA}(f,g)\in\cS'(\rdd)$. Now,  take any $\f\in\cS(\rdd)$ and use the definition of vector-valued integral in a weak sense (see, e.g., \cite[Subsection 1.2.4]{Elena-book}) which entails
%	\begin{align*}
%	\la W_\cA(f,g)(z), \f\ra &= \la f\otimes \bar g, \mu(\cA)^{-1}\f\ra\\
%	&= \la \frac{1}{\langle\gamma,g\rangle}\int_{\rdd}V_gf(w)[(\pi(w)\gamma)\otimes \bar g ] dw, \mu(\cA)^{-1}\f\ra\\
%	&=	 \frac{1}{\langle\gamma,g\rangle}\int_{\rdd}V_gf(w)\la [(\pi(w)\gamma)\otimes \bar g ], \mu(\cA)^{-1}\f\ra dw\\
%	&=\frac{1}{\langle\gamma,g\rangle}\int_{\rdd}V_gf(w)\la \mu(\cA)(\pi(w)\gamma\otimes \bar g),\f\ra dw\\
%		&=\frac{1}{\langle\gamma,g\rangle}\int_{\rdd}V_gf(w)\la W_\cA(\pi(w)\gamma,g)(z),\f\ra dw.
%	\end{align*}
%	Therefore, 

		Now, if  $f\in M^{p,q}_m(\rd)$, the integral on the right-hand side is absolutely convergent as we shall see presently. For any $z\in\rdd$,
	\begin{align}
		|W_\cA(f,g)(z)|&\leq\frac{1}{|\langle\gamma,g\rangle|}\int_{\rdd}|V_gf(w)||W_\cA(\gamma,g)(z-E_\cA w)|dw \nonumber \\
		&=\frac{|\det (E_\cA)|^{-1}}{|\langle\gamma,g\rangle|}\int_{\rdd}|V_gf(E_\cA^{-1} u)||W_\cA(\gamma,g)(z-u)|du \nonumber \\
		\label{2star}
		&=\frac{|\det (E_\cA)|^{-1}}{|\langle\gamma,g\rangle|}|V_gf \circ E_\cA^{-1}|\ast |W_\cA(\gamma,g)|(z).
	\end{align}
	Since $\gamma,g\in\cS(\rd)$, $W_\cA(\gamma,g)\in\cS(\rdd)\subset L^1_{v}(\rdd)$. Moreover, by Theorem \ref{thmA12} and Theorem \ref{thmA21} both applied with $S=E_\cA^{-1}$, we have that $V_gf\circ E_\cA^{-1} \in L^{p,q}_m(\rdd)$. Young's convolution inequality applied to \eqref{2star} entails
	\[
		\norm{W_\cA(f,g)}_{L^{p,q}_m}\lesssim \norm{V_gf}_{L^{p,q}_m}\norm{W_\cA(\gamma,g)}_{L^1_{v}}<\infty.
	\]
	$\Leftarrow.$ Assume that $W_\cA(f,g)\in L^{p,q}_m(\rdd)$. Using Corollary \ref{cor218} with $g_3=g_1=g$, $g_2=\gamma$,  for any $w\in \rdd$,
	\begin{align}
		|V_gf(w)|&\lesssim \frac{1}{|\langle \gamma,g\rangle|}|\langle W_\cA(f,g),W_\cA(\pi(w)g,\gamma)\rangle| \nonumber\\
		&\lesssim \int_{\rdd}|W_\cA(f,g)(u)||W_\cA(\pi(w)g,\gamma)(u)|du \nonumber\\
		&\lesssim \int_{\rdd} |W_\cA(f,g)(u)||W_\cA(g,\gamma)(u-E_\cA w)|du\nonumber\\
		&\lesssim \int_{\rdd} |W_\cA(f,g)(u)| |[W_\cA(g,\gamma)]^\ast(E_\cA w-u)|du\nonumber\\
		\label{3star}
		&=|W_\cA(f,g)|\ast |[W_\cA(g,\gamma)]^\ast|(E_\cA w).
	\end{align}
	Applying Theorem \ref{thmA12} and Theorem \ref{thmA21} with $S=E_\cA$, we obtain
	\begin{align*}
		\norm{f}_{M^{p,q}_m}&\asymp \norm{V_gf}_{L^{p,q}_m}\lesssim \norm{ |W_\cA(f,g)| \ast |[W_\cA(g,\gamma)]^\ast|}_{L^{p,q}_m}\\
		&\lesssim\norm{W_\cA(f,g)}_{L^{p,q}_m}\norm{W_\cA(g,\gamma)}_{L^1_{v}}<\infty,
	\end{align*}
	since we considered an even submultiplicative weight $v$.
\end{proof}

\begin{remark}\label{remark3.8}
	Theorem \ref{thmF} is sharp. Namely, if either $E_\cA$ is not shift-invertible or $E_\cA$ is not upper triangular, $W_\cA$ may not characterize modulation spaces. We provide two counterexamples. \\
	$(a)$ If $E_\cA$ is not shift-invertible, then $W_\cA$ may not characterize modulation spaces. Let $W_0$ be the (cross-)Rihacek distribution defined in \eqref{RD}. Obviously,
 for every $f\in L^p(\rd)$ and $g\in\cS(\rd)$, we obtain $\norm{W_0(f,g)}_{L^{p,q}}=\norm{f}_p\norm{\hat g}_q$. This means that  the $L^{p,q}$-norm of $W_0$ is not equivalent to the modulation norm in general. Observe that the corresponding matrix $E_{A_0}$ is not shift-invertible. In fact,
	\[
		E_{A_0}=\begin{pmatrix}
			I_{d\times d} & 0_{d\times d}\\
			0_{d\times d} & 0_{d\times d}
		\end{pmatrix}
	\]
	is not invertible. Similarly, the (cross-)conjugate-Rihacek distribution $W_1$ in \eqref{CRD} is not shift-invertible and does not characterize modulation spaces \cite[Remark 3.7]{CR2021}.\\
	$(b)$ If $E_\cA$ is not upper-triangular, then $W_\cA$ may not characterize modulation spaces. Let $C\in\bR^{2d\times 2d}\setminus\{0_{2d\times 2d}\}$ be any symmetric matrix. Then, up to a sign,
	\[
		V_g(\mu(V_C)f)=\mu(A_{ST})(\mu(V_C)f\otimes\bar g)=\mu(A_{ST}{V_{\tilde C}})(f\otimes \bar g),
	\]
	where
	\[
		{V_{\tilde C}}=\begin{pmatrix}
			I_{d\times d} & 0_{d\times d} & 0_{d\times d} & 0_{d\times d}\\
			0_{d\times d} & I_{d\times d} & 0_{d\times d} & 0_{d\times d}\\
			C & 0_{d\times d} & I_{d\times d} & 0_{d\times d}\\
			0_{d\times d} & 0_{d\times d} & 0_{d\times d} & I_{d\times d}
		\end{pmatrix},
	\]
	see formula (\ref{ce1}) in the Appendix $B$. Let $\cA:=A_{ST}{V_{\tilde C}}$ It is easy to verify that 
	\[
		E_{\cA}=\begin{pmatrix}
			I_{d\times d} & 0_{d\times d}\\
			C & I_{d\times d}
		\end{pmatrix},
	\]
	which is always invertible and lower-triangular. The metaplectic operator  $\mu(V_C)$ is unbounded on $M^{p,q}(\rd)$, $1\leq p,q\leq\infty$, $p\not=q$, cf. \cite[Proposition 7.1]{FIO1}. Namely, if $f\in M^{p,q}(\rd)$, $\mu(V_C)f\notin M^{p,q}(\rd)$ for $p\not=q$ and, consequently, $\mu(\cA)(f\otimes\bar g)\notin L^{p,q}(\rdd)$. Observe that a similar result with different methods is obtained in \cite[Theorem 3.3]{Fuhr}.
\end{remark}
As byproduct of the previous theorem we obtain new properties for shift-invertible representations $W_\cA$, see ahead.
\begin{corollary}
	Let $\mu(\cA)\in Mp(2d,\bR)$ with $W_\cA$ shift-invertible.   Then, for all $f,g\in L^2(\rd)$, we have $W_\cA(f,g)\in L^\infty(\rdd)$ and it is everywhere defined.
\end{corollary}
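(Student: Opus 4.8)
The plan is to dominate $|W_\cA(f,g)|$ pointwise by the convolution of two square-integrable functions on $\rdd$, which forces $W_\cA(f,g)$ into $L^\infty(\rdd)$, and then to recognise the resulting continuous function as a representative of $W_\cA(f,g)$. The first step is a reproducing formula in which only the \emph{first} entry of $W_\cA$ is expanded: fix any $\phi\in\cS(\rd)$ with $\|\phi\|_2=1$, so that the STFT inversion formula gives $f=\int_{\rdd}V_\phi f(w)\,\pi(w)\phi\,dw$ in $L^2(\rd)$; tensoring with $\bar g$ and carrying the unitary operator $\mu(\cA)$ inside the weak vector-valued integral yields
\[
W_\cA(f,g)=\int_{\rdd}V_\phi f(w)\,W_\cA(\pi(w)\phi,g)\,dw .
\]
This is the analogue of Lemma \ref{intertFormula} with the auxiliary window inserted only on the left, so that no pairing condition is required and $g$ may be an arbitrary element of $L^2(\rd)$.

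Next I would apply the shift-invertibility identity \eqref{idSI}, $W_\cA(\pi(w)\phi,g)=\pi(E_\cA w,F_\cA w)W_\cA(\phi,g)$, whence $|W_\cA(\pi(w)\phi,g)(z)|=|W_\cA(\phi,g)(z-E_\cA w)|$. Since $W_\cA$ is shift-invertible, $E_\cA\in GL(2d,\bR)$ and the substitution $u=E_\cA w$ is admissible, giving for every $z\in\rdd$
\[
|W_\cA(f,g)(z)|\le|\det E_\cA|^{-1}\bigl(|V_\phi f\circ E_\cA^{-1}|\ast|W_\cA(\phi,g)|\bigr)(z).
\]
Here $V_\phi f\in L^2(\rdd)$ because $f\in L^2(\rd)$, hence $V_\phi f\circ E_\cA^{-1}\in L^2(\rdd)$, while $W_\cA(\phi,g)=\mu(\cA)(\phi\otimes\bar g)\in L^2(\rdd)$ because $\phi\otimes\bar g\in L^2(\rdd)$ and $\mu(\cA)$ is unitary on $L^2(\rdd)$. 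Young's inequality ($L^2\ast L^2\hookrightarrow L^\infty$, in fact into the space $C_0$ of continuous functions vanishing at infinity) then gives $W_\cA(f,g)\in L^\infty(\rdd)$ with $\|W_\cA(f,g)\|_\infty\lesssim\|f\|_2\|g\|_2$. For the everywhere-defined assertion, I would note that the function $z\mapsto\int_{\rdd}V_\phi f(w)\,e^{2\pi iF_\cA w\cdot z}\,W_\cA(\phi,g)(z-E_\cA w)\,dw$ is an absolutely convergent integral at every $z$ (Cauchy--Schwarz on the two $L^2$ factors) and is continuous in $z$ by dominated convergence and the strong continuity of translations and modulations on $L^2(\rdd)$; this continuous function provides the everywhere-defined representative.

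The one technical obstacle is this last identification: since $g$ is merely in $L^2(\rd)$, $W_\cA(\phi,g)$ need not be integrable, so Fubini cannot be invoked directly to equate the pointwise integral just displayed with the weak $L^2$-valued integral defining $W_\cA(f,g)$. I would resolve it by truncating the $w$-integration to balls $\{|w|\le R\}$: the truncated integrals are genuine $L^2(\rdd)$-valued Bochner integrals whose pointwise evaluation is legitimate (Fubini over a compact set of $w$'s), they converge to $W_\cA(f,g)$ in $L^2(\rdd)$ as $R\to\infty$, and simultaneously they converge uniformly in $z$ (again by Cauchy--Schwarz, using that the $L^2$-mass of $V_\phi f$ outside the ball vanishes as $R\to\infty$) to the continuous function above; comparing the two limits identifies that function with $W_\cA(f,g)$ almost everywhere. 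Alternatively one approximates $g$ in $L^2(\rd)$ by Schwartz windows $g_n$, for which $W_\cA(\phi,g_n)\in\cS(\rdd)$ and Fubini is immediate, and passes to the limit using the bound $\|W_\cA(f,g_n)-W_\cA(f,g)\|_\infty=\|W_\cA(f,g_n-g)\|_\infty\lesssim\|g_n-g\|_2$ obtained above.
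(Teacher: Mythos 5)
Your proof is correct and takes essentially the same route as the paper: the paper's proof likewise specializes the convolution bound \eqref{2star} to $p=q=2$, $m\equiv 1$ (i.e.\ domination of $|W_\cA(f,g)|$ by a convolution of two $L^2(\rdd)$ functions via the reproducing formula and the covariance identity \eqref{idSI}) and concludes by Young's inequality $L^2\ast L^2\hookrightarrow L^\infty(\rdd)$. Your variant — placing a fixed normalized Schwartz window only in the first slot, which removes the pairing condition $\langle\gamma,g\rangle\neq 0$, and the truncation/density identification of the continuous, everywhere-defined representative for $g\in L^2(\rd)$ — merely supplies details that the paper's shorter proof leaves implicit.
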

\begin{proof}
	If $f\in L^2(\rd)$ and $g,\gamma\in\cS(\rd)$, the inequality (\ref{2star}) holds pointwise (take $p=q=2$, $m=1$). Also, if $g\in L^2(\rd)$ the right hand-side of (\ref{2star}) is also well defined for all $z\in\rdd$, since $W_\cA$ maps $L^2(\rd)\times L^2(\rd)$ to $L^2(\rdd)$. By Young's inequality,
	\[
		\norm{W_\cA(f,g)}_{L^\infty(\rdd)}\lesssim\norm{V_gf\circ E_\cA^{-1}}_{L^2(\rdd)}\norm{W_\cA(\gamma,g)}_{L^2(\rdd)}=\norm{f}_2\norm{g}_2^2\norm{\gamma}_2<\infty.
	\]
	Hence, $W_\cA(f,g)\in L^\infty(\rdd)$ and $W_\cA(f,g)(z)$ is well defined for all $z\in\rdd$.
\end{proof}

If we limit to the case $p=q$, then $\mathfrak{T}_S:L^p(\rdd)\to L^p(\rdd)$ is bounded for all $S\in GL(2d,\bR)$, without any further assumption on its triangularity. In this case, arguing as above, but using Theorem \ref{thmA11}, we obtain the following result.

\begin{theorem}\label{thmPR}
	Let $W_\cA$ be shift-invertible and $m\in\cM_v(\rdd)$ with $m\asymp m\circ E_\cA^{-1}$. For $1\leq p\leq\infty$ and $g\in\cS(\rd)\setminus\{0\}$, we have
	\[
		\norm{f}_{M^p_m}\asymp\norm{W_\cA(f,g)}_{L^p_m}.
	\]
\end{theorem}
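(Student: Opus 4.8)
The plan is to repeat the proof of Theorem~\ref{thmF} almost verbatim, the single change being that for $p=q$ one may invoke the dilation estimate for \emph{arbitrary} invertible matrices, Theorem~\ref{thmA11}, in place of the triangular versions Theorem~\ref{thmA12} and Theorem~\ref{thmA21}. Indeed, from the hypothesis $m\asymp m\circ E_\cA^{-1}$ one also gets $m\circ E_\cA\asymp m$ (compose with $E_\cA$ on the right), so that the dilation $\mathfrak{T}_S\colon F\mapsto|\det S|^{1/2}F(S\,\cdot)$ preserves $L^p_m(\rdd)$ with equivalence of norms for both $S=E_\cA$ and $S=E_\cA^{-1}$; no triangularity is needed here precisely because $L^{p,p}_m=L^p_m$ is dilation invariant, unlike $L^{p,q}_m$ for $p\ne q$ (cf.\ Remark~\ref{remark3.8}$(b)$).

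For the implication $f\in M^p_m(\rd)\Rightarrow W_\cA(f,g)\in L^p_m(\rdd)$, I would fix $\gamma\in\cS(\rd)$ with $\langle\gamma,g\rangle\ne0$ and, using Lemma~\ref{intertFormula} and the shift-covariance identity \eqref{idSI}, reproduce the pointwise bound \eqref{2star}, namely
\[
	|W_\cA(f,g)(z)|\le\frac{|\det E_\cA|^{-1}}{|\langle\gamma,g\rangle|}\bigl(|V_gf\circ E_\cA^{-1}|\ast|W_\cA(\gamma,g)|\bigr)(z),\qquad z\in\rdd.
\]
Then $W_\cA(\gamma,g)\in\cS(\rdd)\subset L^1_v(\rdd)$ by Proposition~\ref{prop32}$(ii)$, while Theorem~\ref{thmA11} with $S=E_\cA^{-1}$ gives $V_gf\circ E_\cA^{-1}\in L^p_m(\rdd)$ with norm $\asymp\norm{V_gf}_{L^p_m}=\norm{f}_{M^p_m}$; the convolution relation $L^p_m(\rdd)\ast L^1_v(\rdd)\hookrightarrow L^p_m(\rdd)$ together with Young's inequality then closes this direction. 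For the converse, I would use Corollary~\ref{cor218} with $g_3=g_1=g$, $g_2=\gamma$, together with \eqref{idSI}, to obtain as in \eqref{3star} the bound
\[
	|V_gf(w)|\lesssim\bigl(|W_\cA(f,g)|\ast|[W_\cA(g,\gamma)]^\ast|\bigr)(E_\cA w),\qquad w\in\rdd,
\]
and then apply Theorem~\ref{thmA11} with $S=E_\cA$ (legitimate since $m\circ E_\cA\asymp m$) and Young's inequality with $[W_\cA(g,\gamma)]^\ast\in\cS(\rdd)\subset L^1_v(\rdd)$ (here $v$ even is used) to get $\norm{f}_{M^p_m}\asymp\norm{V_gf}_{L^p_m}\lesssim\norm{W_\cA(f,g)}_{L^p_m}\norm{W_\cA(g,\gamma)}_{L^1_v}$.

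I do not expect any serious obstacle: the only substantive input beyond Theorem~\ref{thmF} is Theorem~\ref{thmA11}, which asserts that for $p=q$ the operator $F\mapsto F(S\,\cdot)$ is $L^p_m$-bounded for every invertible $S$ with $m\circ S\asymp m$ --- exactly the property that fails in the mixed-norm regime $p\ne q$ and forced the upper-triangularity hypothesis in Theorem~\ref{thmF}. The only point requiring a line of care is the endpoint $p=\infty$, where one simply checks that the pointwise convolution bounds above, combined with $L^\infty_m\ast L^1_v\hookrightarrow L^\infty_m$, still deliver the claimed estimates.
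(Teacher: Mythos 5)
Your proposal is correct and follows essentially the same route as the paper, which proves Theorem \ref{thmPR} precisely by repeating the argument of Theorem \ref{thmF} with Theorem \ref{thmA11} (valid for arbitrary $S\in GL(2d,\bR)$ when $p=q$) replacing the triangular dilation results, the weight being handled exactly as you do via $m\circ E_\cA^{\pm1}\asymp m$ (this is the content of Theorem \ref{thmA21}).
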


\begin{corollary}
	%Let $v_1(z_1)=v(z_1,0)$ and $v(z_2)=v(0,z_2)$. 
	Under the assumptions of Theorem \ref{thmF}, assume that $(v\otimes v)\circ \cA^{-1}\asymp v\otimes v$, then the window class can be enlarged to $M^1_v(\rd)$. 
\end{corollary}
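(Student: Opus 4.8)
The plan is to follow the structure of the proof of Theorem \ref{thmF}, tracking carefully where the hypothesis $g\in\cS(\rd)$ was used and replacing it by the weaker hypothesis $g\in M^1_v(\rd)\setminus\{0\}$, using the extra assumption $(v\otimes v)\circ\cA^{-1}\asymp v\otimes v$ to control the relevant quantities. In the proof of Theorem \ref{thmF} the Schwartz hypothesis on the fixed window $g$ (and on the auxiliary window $\gamma$) is used in exactly two places: first, to guarantee that $W_\cA(\gamma,g)\in\cS(\rdd)\sset L^1_v(\rdd)$ in the estimate \eqref{2star} for the implication ``$\Rightarrow$''; and second, to guarantee that $W_\cA(g,\gamma)\in L^1_v(\rdd)$ in the estimate \eqref{3star} for the implication ``$\Leftarrow$''. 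So the whole point is to show that, under the moderateness hypothesis on $v\otimes v$, one still has $W_\cA(\gamma,g)\in L^1_v(\rdd)$ when $g\in M^1_v(\rd)$ and $\gamma\in\cS(\rd)$ (or even $\gamma\in M^1_v(\rd)$).

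First I would record the key fact that $\mu(\cA)$ maps $M^1_{v\otimes v}(\rdd)$ into $M^1_{(v\otimes v)\circ\cA^{-1}}(\rdd)$ (symplectic covariance of modulation spaces under metaplectic operators: $V_\psi(\mu(\cA)F)(z) = V_{\mu(\cA)^{-1}\psi}F(\cA^{-1}z)$ up to a phase, whence $\norm{\mu(\cA)F}_{M^1_n} = \norm{F}_{M^1_{n\circ\cA}}$ for any moderate $n$, after choosing the analyzing window suitably). Under the assumption $(v\otimes v)\circ\cA^{-1}\asymp v\otimes v$ this gives that $\mu(\cA)$ is bounded on $M^1_{v\otimes v}(\rdd)$. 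Next I would use the standard tensor-factorization $M^1_v(\rd)\otimes M^1_v(\rd) \hookrightarrow M^1_{v\otimes v}(\rdd)$, more precisely $\norm{\f\otimes\bar\psi}_{M^1_{v\otimes v}(\rdd)} = \norm{\f}_{M^1_v(\rd)}\norm{\psi}_{M^1_v(\rd)}$ (this follows from $V_{g_1\otimes g_2}(\f\otimes\psi) = V_{g_1}\f\otimes V_{g_2}\psi$ and Fubini). Combining these, for $g,\gamma\in M^1_v(\rd)$ we get $W_\cA(\gamma,g) = \mu(\cA)(\gamma\otimes\bar g) \in M^1_{v\otimes v}(\rdd) \hookrightarrow L^1_{v\otimes v}(\rdd) \hookrightarrow L^1_v(\rdd)$, where the last inclusion uses $v(x,\xi)\lesssim v(x)v(\xi) = (v\otimes v)(x,\xi)$ (submultiplicativity/evenness of $v$ on $\rdd$; more precisely one uses $v\le v\otimes v$ up to equivalence on $\rdd$, as with $v_s$). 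The same reasoning applies to $W_\cA(g,\gamma)$. Then \eqref{2star} and \eqref{3star} go through verbatim with $g,\gamma\in M^1_v(\rd)\setminus\{0\}$ in place of Schwartz windows, and the independence of the modulation-space (quasi-)norm on the window class $M^1_v(\rd)$ (recalled in Section \ref{subsec:MSs}) finishes the argument, giving \eqref{charmod} with equivalence of norms for the enlarged window class.

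One subtlety to be handled is that Corollary \ref{cor218} and Lemma \ref{intertFormula}, which feed into the proof of Theorem \ref{thmF}, are stated with the auxiliary window $\gamma\in\cS(\rd)$ and the STFT reproducing formulas used there; but these formulas (the inversion formula for the STFT and the covariance identity \eqref{idSI}) remain valid for windows in $M^1_v(\rd)$ by density of $\cS(\rd)$ in $M^1_v(\rd)$ together with the continuity of $W_\cA$ and of $V_g$ in the appropriate topologies --- indeed $W_\cA(f,\cdot)$ depends continuously on the window, and $M^1_v\hookrightarrow M^1\hookrightarrow L^2$ makes the $L^2$-reproducing formula available. Alternatively one may simply keep $\gamma\in\cS(\rd)$ throughout (it is only an auxiliary window and nothing forces it to lie in the enlarged class), and only $g$, the distinguished window in \eqref{charmod}, needs to be taken in $M^1_v(\rd)$; this is the cleanest route and avoids re-proving the preliminary lemmas. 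I expect the main obstacle to be purely bookkeeping: verifying that the pairing $\langle W_\cA(f,g), W_\cA(\pi(w)g,\gamma)\rangle$ in Corollary \ref{cor218} still makes sense and the estimates in \eqref{3star} still hold when $g\in M^1_v(\rd)$ rather than $\cS(\rd)$ --- this requires knowing $W_\cA(\pi(w)g,\gamma)\in M^1_{v\otimes v}(\rdd)$ uniformly enough in $w$, which again follows from the boundedness of $\mu(\cA)$ on $M^1_{v\otimes v}$ together with $\norm{\pi(w)g}_{M^1_v} \le v(w)\norm{g}_{M^1_v}$, the weight $v(w)$ being absorbed by the $v$-moderateness of $m$ exactly as in the Schwartz case. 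With these observations in place the corollary is immediate.
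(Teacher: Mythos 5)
Your proposal is correct and follows essentially the same route as the paper: both arguments reduce the corollary to showing $W_\cA(\gamma,g)\in L^1_v(\rdd)$ for $g\in M^1_v(\rd)$ and an auxiliary Schwartz window $\gamma$, by combining the tensor estimate $\|\gamma\otimes\bar g\|_{M^1_{v\otimes v}}\asymp\|\gamma\|_{M^1_v}\|g\|_{M^1_v}$, the boundedness of $\mu(\cA)$ on $M^1_{v\otimes v}(\rdd)$ (exactly where the hypothesis $(v\otimes v)\circ\cA^{-1}\asymp v\otimes v$ enters; the paper cites F\"uhr--Shafkulovska where you sketch the covariance argument), and the embedding $M^1_{v\otimes v}(\rdd)\hookrightarrow L^1_v(\rdd)$, after which the estimates \eqref{2star} and \eqref{3star} run unchanged. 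The only blemish is the intermediate space you write as $L^1_{v\otimes v}(\rdd)$, whose weight lives on $\bR^{4d}$; the correct chain is $M^1_{v\otimes v}(\rdd)\hookrightarrow M^1_{v\otimes 1}(\rdd)\hookrightarrow L^1_v(\rdd)$ as in the paper, but this is notational and does not affect the argument.
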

\begin{proof}
	By Theorem \ref{thmPR}, if $\gamma\in \cS(\rd)$ and $g\in M^1_v(\rd)$, $W_\cA(g,\gamma)\in L^1_v(\rdd)$, so that 
	\begin{equation}\label{star3}
		|W_\cA(f,g)(z)|\lesssim \frac{1}{|\langle\gamma,g\rangle|}|\det(E_\cA)|^{-1}|V_gf\circ E_\cA^{-1}|\ast|W_\cA(\gamma,g)|(z)
	\end{equation}
	is well defined by (\ref{2star}) provided that $W_\cA(\gamma,g)\in L^1_v(\rdd)$. 
	
	By \cite[Proposition 2.4]{CR2022}, %\textcolor{red}{up to a sign}
	\[
		W_\cA(\gamma,g)=W_{\tilde\cA}(\bar g,\bar \gamma)=\mu(\cA)\mu(\cD_L)(\bar g\otimes \gamma),
	\]
	with $\tilde\cA=\cA\cD_L$. {Now, $\bar g\otimes \gamma\in M^1_v(\rd)\otimes \cS(\rd)\subset M^1_{v\otimes v}(\rdd)$ and $\mu(\cD_L)(\bar g\otimes \gamma)=\gamma\otimes \bar g$, so that $\mu(\cD_L):M^1_{v\otimes v}(\rdd)\to M^1_{v\otimes v}(\rdd)$}. Indeed,
	\[
		\norm{\mu(\cD_L)(\bar g\otimes \gamma)}_{M^1_{v\otimes v}}\asymp \norm{\gamma}_{M^1_{v}}\norm{g}_{M^1_{v}}.
	\]
	On the other hand, $\mu(\cA):M^1_{v\otimes v}(\rdd)\to M^1_{v\otimes v}(\rdd)$ by \cite[Theorem 3.2]{Fuhr} and \cite[Corollary 4.5]{Fuhr}. Moreover, $M^1_{v\otimes v}(\rdd)\hookrightarrow M^1_{v\otimes1}(\rdd)\hookrightarrow L^1_{v}(\rdd)$, since $$v(z)\leq v(z)v(w)$$ for all $z,w\in\rdd$. Hence,
	\begin{align*}
		\norm{W_\cA(\gamma,g)}_{L^1_v}&=\norm{W_{\tilde\cA}(\bar g,\bar \gamma)}_{L^1_v}\leq \norm{W_{\tilde \cA}(\bar g,\bar \gamma)}_{M^1_{v\otimes 1}}\leq \norm{W_{\tilde \cA}(\bar g,\bar \gamma)}_{M^1_{v\otimes v}}\\
		&\lesssim_\cA \norm{\mu(\cD_L)\bar g\otimes \gamma}_{M^1_{v\otimes v}}\asymp \norm{\bar g}_{M^1_{v}}\norm{\gamma}_{M^1_{v}}\asymp_{\cA,\gamma}\norm{g}_{M^1_v}<\infty.
	\end{align*}
	Going back to (\ref{star3}), we obtain
	\[
		\norm{W_\cA(f,g)}_{L^{p,q}_m}\lesssim \norm{V_gf}_{L^{p,q}_m}\norm{W_\cA(\gamma,g)}_{L^1_v}\asymp_{\cA,\gamma,g}\norm{f}_{M^{p,q}_m}.
	\]
	Whence, if $g\in M^1_v(\rd)$ and $f\in M^{p,q}_m(\rd)$, the $\cA$-Wigner $W_\cA(f,g)$ is in $L^{p,q}_m(\rdd)$, with $\norm{W_\cA(f,g)}_{L^{p,q}_m}\lesssim \norm{f}_{M^{p,q}_m}$.
	
	Vice versa, we have  shown that if $g\in M^1_v(\rd)$ and $f\in M^{p,q}_m(\rd)$,  the $\cA$-Wigner $W_\cA(f,g)$ is in $L^{p,q}_m(\rdd)$. By (\ref{3star}), for all $w\in\rdd$,
	\[
		|V_gf(w)|\lesssim|W_\cA(f,g)|\ast|[W_\cA(g,\gamma)]^\ast|(E_\cA w),
	\]
	and Young's inequality gives
	\[
		\norm{f}_{M^{p,q}_m}\lesssim_{\cA,g,\gamma}\norm{W_\cA(f,g)}_{L^{p,q}_m}.
	\]
	In conclusion, $\norm{f}_{M^{p,q}_m}\asymp\norm{W_\cA(f,g)}_{L^{p,q}_m}$, with $g\in M^1_v(\rd)$.
\end{proof}

Another consequence  of Theorem \ref{thmF} is the characterization of Wiener amalgam spaces $W(\cF L^p_{m_1},L^q_{m_2})(\rd)$.

\begin{corollary}\label{corWiener}
Let $\mu(\cA)\in Mp(2d,\bR)$ be such that $W_\cA$ is shift-invertible and $1\leq p, q\leq\infty$. Let $m_1,m_2\in\cM_v(\rdd)$ be such that $m_2\asymp \cI m_2$, being $\cI m_2(x)=m_2(-x)$, and $\cA=\pi^{Mp}(\mu(\cA))$ having block decomposition in \eqref{blockA}.
Fix $g\in\cS(\rd)\setminus\{0\}$ and define
	\begin{equation}\label{tEA0}
		\tilde E_\cA=-J E_\cA \begin{pmatrix} 0_{d\times d} & I_{d\times d} \\ I_{d\times d} & 0_{d\times d} \end{pmatrix}.
	\end{equation}
If $m_1\otimes m_2\asymp (m_1\otimes m_2)\circ \tilde E_{\cA}^{-1}$ and $E_\cA$ is lower triangular, then
\[
	\norm{f}_{W(\cF L^p_{m_1},L^q_{m_2})}\asymp\Big(\int_{\rd}\Big(\int_{\rd}|W_\cA(f,g)(x,\xi)|^pm_1(\xi)^pd\xi\Big)^{q/p}m_2(x)^qdx\Big)^{1/q},
	\]
	with the analogous for $\max\{p,q\}=\infty$.
\end{corollary}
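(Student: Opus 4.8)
The strategy is to reduce the statement about Wiener amalgam spaces to the already-established characterization of modulation spaces in Theorem \ref{thmF}, via the identity $\norm{f}_{W(\cF L^p_{m_1},L^q_{m_2})}=\norm{\hat f}_{M^{p,q}_{m_1\otimes m_2}}$ recalled in Section \ref{subsec:26}. Concretely, $f\in W(\cF L^p_{m_1},L^q_{m_2})(\rd)$ if and only if $\cF f\in M^{p,q}_{m_1\otimes m_2}(\rd)$, and by Theorem \ref{thmF} the latter is equivalent to $W_\cB(\cF f,g)\in L^{p,q}_{m_1\otimes m_2}(\rdd)$ for a suitable shift-invertible, upper-triangular $\cB$. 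The point is then to relate $W_\cB(\cF f, g)$ to $W_\cA(f, g)$ and to identify $\cB$ explicitly. Since $\cF = \mu(J)$, precomposing with the Fourier transform on the first tensor factor amounts to replacing $\cA$ by $\cA\cdot(J\oplus \Id)$ (in the sense of Appendix $B$); one computes the new block structure of $E_{\cA(J\oplus\Id)}$ in terms of the blocks of $\cA$, and this is exactly the matrix $\tilde E_\cA$ defined in \eqref{tEA0}. The conjugation by the block swap $\left(\begin{smallmatrix}0 & I\\ I & 0\end{smallmatrix}\right)$ and the $-J$ factor encode precisely how the Fourier transform in the signal variable turns into the swap $x\leftrightarrow\xi$ together with the sign change on the frequency side coming from $J$.

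\textbf{Key steps, in order.} First I would record the identity $W_\cA(f,g)(x,\xi)$ in terms of $W_{\cA'}(\hat f, g)$ where $\cA' = \cA \,(J\oplus \Id_{2d})$, using the composition rule for metaplectic operators and the computation of the associated symplectic matrix from Appendix $B$ (the analogue of formula \eqref{ce1}); here I must be careful that $J$ acts only on the first $\rd$-factor of the tensor product, so the relevant $4d\times 4d$ matrix is block-diagonal with $J$ in the top-left $2d\times 2d$ block. Second, I would compute $E_{\cA'}$ from \eqref{defEA}: writing $\cA'$ in blocks $A'_{ij}$, the columns indexed by the first $d$ and third $d$ coordinates of $\cA'$ are obtained from those of $\cA$ by applying $J\oplus\Id$ on the right, which mixes the first and second block-columns of $\cA$ with a sign; carrying this out gives $E_{\cA'} = \tilde E_\cA$ as in \eqref{tEA0}. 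Third, I check that $W_\cA$ shift-invertible with $E_\cA$ lower triangular forces $\tilde E_\cA = -JE_\cA\left(\begin{smallmatrix}0 & I\\ I& 0\end{smallmatrix}\right)$ to be invertible and upper triangular — this is a direct matrix computation: conjugating a lower-triangular $2d\times 2d$ matrix by the anti-diagonal block swap makes it upper triangular, and left-multiplication by $J$ preserves this (one should verify the block pattern explicitly, noting that $E_\cA$ lower triangular means $A_{13}=0_{d\times d}$, so $\tilde E_\cA$ has the zero block in the upper-right position after the transformations). Fourth, I verify that the weight hypothesis $m_1\otimes m_2 \asymp (m_1\otimes m_2)\circ \tilde E_\cA^{-1}$ is exactly the condition needed to invoke Theorem \ref{thmF} for $\cA'$ with weight $m=m_1\otimes m_2$, and that $m_2\asymp \cI m_2$ together with the fundamental identity \eqref{fundid} handle the sign discrepancy between $V_gf$ and $V_{\hat g}\hat f$ (equivalently, between the mixed-norm layout $(\xi,x)$ natural for $\hat f$ and the layout $(x,\xi)$ appearing in the statement). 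Finally I assemble: $\norm{f}_{W(\cF L^p_{m_1},L^q_{m_2})} = \norm{\hat f}_{M^{p,q}_{m_1\otimes m_2}} \asymp \norm{W_{\cA'}(\hat f, g)}_{L^{p,q}_{m_1\otimes m_2}} = \norm{W_\cA(f,g)}_{L^{p,q}_{m_1\otimes m_2}}$ up to relabeling the mixed-norm variables, which is precisely the claimed double-integral expression.

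\textbf{Main obstacle.} The bookkeeping in the second and third steps is where the real work lies: one must correctly track which block-columns of $\cA$ get permuted and sign-flipped when composing with $\mu(J)$ acting on the first variable only, then extract $E_{\cA'}$ from \eqref{defEA}, and finally confirm that the "lower-triangular $E_\cA$" hypothesis transforms into the "upper-triangular $E_{\cA'}$" hypothesis required by Theorem \ref{thmF}. A secondary subtlety is keeping the weights consistent: the mixed-norm on the Wiener amalgam side puts $m_1$ on the inner (frequency) integral and $m_2$ on the outer (time) integral, whereas $M^{p,q}$ of $\hat f$ has the roles of time and frequency swapped relative to $f$; the hypothesis $m_2\asymp\cI m_2$ is precisely what reconciles the sign in \eqref{fundid}, and one should state this reconciliation carefully rather than wave it through. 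I would also note the routine adjustments for $\max\{p,q\}=\infty$ at the end, as the statement requests, but these follow the same pattern.
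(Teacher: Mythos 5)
Your overall reduction (pass to $\norm{f}_{W(\cF L^p_{m_1},L^q_{m_2})}=\norm{\hat f}_{M^{p,q}_{m_1\otimes m_2}}$ and invoke Theorem \ref{thmF} for a modified matrix) is the right idea, but the bookkeeping does not close, and the error is structural rather than cosmetic. If you only transfer the Fourier transform onto the first tensor factor, i.e. set $\cA'=\cA\,\cC^{-1}$ with $\cC$ the $4d\times 4d$ symplectic matrix of $\cF\otimes \mathrm{Id}$ (in the paper's coordinate ordering this matrix is \emph{not} block-diagonal with $J$ in the top-left corner; it interleaves the blocks, cf. Appendix B), then reading off \eqref{defEA} gives
$E_{\cA'}=\begin{pmatrix} A_{13} & -A_{11}\\ A_{23} & -A_{21}\end{pmatrix}=E_\cA\begin{pmatrix} 0 & -I_{d\times d}\\ I_{d\times d} & 0\end{pmatrix}$,
which is \emph{not} $\tilde E_\cA=-JE_\cA\begin{pmatrix}0 & I_{d\times d}\\ I_{d\times d} & 0\end{pmatrix}$: the left factor $-J$ is missing. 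That factor does not come from hatting $f$; it comes from an additional composition on the \emph{left} with $\cD_{J^{-1}}$, i.e. from swapping the output variables $(x,\xi)$, which is forced because the norm in the statement integrates $\xi$ in the inner $L^p$ and $x$ in the outer $L^q$, whereas $L^{p,q}_m$ as defined puts the first variable inside. Concretely, for $\cA=A_{ST}$ (so $E_\cA=I_{2d\times 2d}$, lower triangular) your $E_{\cA'}$ is antidiagonal, hence not upper triangular, so Theorem \ref{thmF} cannot be applied to $\cA'$; moreover the available weight hypothesis concerns $\tilde E_\cA$, not $E_{\cA'}$. Relatedly, your closing step ``up to relabeling the mixed-norm variables'' is not legitimate: for $p\neq q$ the $L^{p,q}$ norm is not invariant under exchanging the order of integration, so the swap cannot be waved through; it must be encoded as a change of variables in the argument of $W_\cA(f,g)$ and folded into the symplectic matrix \emph{before} Theorem \ref{thmF} is used.

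For comparison, the paper's proof does exactly this missing work: it writes $|W_\cA(f,g)(x,\xi)|=|W_{\cA_0}(f,g)(\xi,-x)|$ with $\cA_0=\cD_{J^{-1}}\cA$, then passes to $|W_{\tilde\cA_0}(\hat f,\hat g)|$ via \cite[Proposition 2.7]{CR2022} (both arguments get hatted; the window becomes $\hat g\in\cS(\rd)\setminus\{0\}$, which is harmless), checks that $E_{\tilde\cA_0}=\tilde E_\cA$ is invertible and upper triangular precisely when $E_\cA$ is invertible and lower triangular, uses $\cI m_2\asymp m_2$ to absorb the sign in $-x$, and only then applies Theorem \ref{thmF} with $m=m_1\otimes m_2$. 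Two smaller slips in your sketch would also need repair: left multiplication by $J$ does not in general preserve upper-triangularity (the correct check is the direct block computation: $A_{13}=0_{d\times d}$ yields $\tilde E_\cA=\begin{pmatrix}-A_{23} & -A_{21}\\ 0_{d\times d} & A_{11}\end{pmatrix}$), and the matrix of $\cF\otimes\mathrm{Id}$ is block-diagonal only after reordering coordinates, in which case \eqref{defEA} can no longer be read off as stated — mixing these two conventions is most likely what produced the erroneous identification $E_{\cA'}=\tilde E_\cA$.
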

\begin{proof}
	Assume that $\max\{p,q\}<\infty$. We use (\ref{fundid}). Let $f\in \cS'(\rd)$, $g\in \cS(\rd)\setminus\{0\}$. Then, %, and set
%	\[
%		\tilde\cA=\begin{pmatrix}
%			A_{13} & -A_{14} & A_{11} & -A_{12} \\
%			A_{23} & -A_{24} & A_{21} & -A_{22} \\
%			A_{33} & -A_{34} & A_{31} & -A_{32} \\
%			A_{43} & -A_{44} & A_{41} & -A_{42}
%		\end{pmatrix}.
%	\]
%	We have:
	\begin{align*}
		\Big(\int_{\rd}\Big(\int_{\rd}&|W_\cA(f,g)(x,\xi)|^pm_1(\xi)^pd\xi\Big)^{q/p}m_2(x)^qdx\Big)^{1/q}\\
		&=\left(\int_{\rd}\left(\int_{\rd}|W_\cA(f,g)(J^{-1}(\xi,-x))|^pm_1(\xi)^pd\xi\right)^{q/p}m_2(x)^qdx\right)^{1/q}.
	\end{align*}
	Now, $|W_\cA(f,g)\circ J^{-1}|=|\mu(\cD_{J^{-1}})\mu(\cA)(f\otimes\bar g)|=|\mu(\cA_0)(f\otimes\bar g)|=|W_{\cA_0}(f,g)|$, where
	\begin{align*}
		\pi^{Mp}(\mu(\cA_0))=\cA_0:=\cD_{J^{-1}}\cA&=\begin{pmatrix}
		-A_{21} & -A_{22} & -A_{23} & -A_{24}\\
		A_{11} & A_{12} & A_{13} & A_{14}\\
		-A_{41} & -A_{42} & -A_{43} & -A_{44}\\
		A_{31} & A_{32} & A_{33} & A_{34}
		\end{pmatrix}.
	\end{align*}
	By \cite[Proposition 2.7]{CR2022}, $|W_{\cA_0}(f,g)|=|W_{\tilde\cA_0}(\hat f,\hat g)|$, where
	\[
		\tilde \cA_0 = \begin{pmatrix}
			-A_{23} & A_{24} & -A_{21} & A_{22}\\
			A_{13} & -A_{14} & A_{11} & -A_{12}\\
			-A_{43} & A_{44} & -A_{41} & A_{42}\\
			A_{33} & -A_{34} & A_{31} & -A_{32}
		\end{pmatrix}.
	\]
	Hence, using that $\cI m_2\asymp m_2$,
	\begin{align*}
		\Big(\int_{\rd}\Big(\int_{\rd}&|W_\cA(f,g)(x,\xi)|^pm_1(\xi)^pd\xi\Big)^{q/p}m_2(x)^qdx\Big)^{1/q}\\
		&\asymp\left(\int_{\rd}\left(\int_{\rd}|W_{\tilde \cA_0}(\hat f,\hat g)(\xi,-x)|^pm_1(\xi)^pd\xi\right)^{q/p}m_2(x)^qdx\right)^{1/q}\\
		&=\left(\int_{\rd}\left(\int_{\rd}|W_{\tilde \cA_0}(\hat f,\hat g)(\xi,x)|^pm_1(\xi)^pd\xi\right)^{q/p}\cI m_2(x)^qdx\right)^{1/q}\\
		&\asymp\norm{W_{\tilde\cA_0}(\hat f,\hat g)}_{L^{p,q}_{m_1\otimes m_2}}.
	\end{align*}
	Observe that $\tilde E_\cA = E_{\tilde \cA_0}$. Since $E_\cA$ is invertible and lower triangular the matrix $\tilde E_\cA$ in (\ref{tEA0}) is obviously invertible (and upper triangular). Hence, using the assumption $m_1\otimes m_2\asymp (m_1\otimes m_2)\circ E_{\tilde \cA_0}^{-1}$, we have
	\[
		\norm{W_{\tilde\cA_0}(\hat f,\hat g)}_{L^{p,q}_{m_1\otimes m_2}}\asymp\norm{\hat f}_{M^{p,q}_{m_1\otimes m_2}}=\norm{f}_{W(\cF L^p_{m_1},L^q_{m_2})}.
	\]
	The same argument also proves the case $\max\{p,q\}=\infty$, simply replacing the corresponding integrals with the essential supremums.
	
%	$E_{\tilde\cA}$ is upper triangular if and only if $A_{23}=0$, i.e. if and only if 
%	\[
%	E_\cA=\begin{pmatrix}
%		A_{11} & A_{13}\\
%		A_{21} & 0_{d\times d}
%	\end{pmatrix}.
%	\]
%	
%	
%	
%	By \cite[Proposition 2.7]{CR2022}, we have:
%	\begin{align*}
%		\norm{f}_{W(L^p_m,L^q)}&=\norm{\hat f}_{M^{p,q}_m}\asymp\norm{W_{\tilde\cA}(\hat f,\hat g)}_{L^{p,q}_m}=\norm{W_{ \cA}(f,g)}_{L^{p,q}_m}\\
%		&=\left(\int_{\rd}\left(\int_{\rd}|W_\cA(f,g)(x,\xi)|^pm(x,\xi)^pdx\right)^{p/q}d\xi\right)^{1/q}
%	\end{align*}
\end{proof}

\begin{remark}
	Because of (\ref{WeM}), Corollary \ref{corWiener} is significant only for $p\neq q$. For $p=q$ we refer to Theorem \ref{thmPR} with $m=m_1\otimes m_2$.
\end{remark}

\section{Examples}\label{esempi}
We exhibit a manifold of new metaplectic Wigner distributions which may find application in time-frequency analysis, signal processing, quantum mechanics and pseudodifferential theory.

\begin{example}\label{upper-sharp} This example generalizes the STFT by applying a metaplectic operator either on the window function $g$ or on the function $f$ as follows. First, consider the matrix 
	\[
	L=\begin{pmatrix}
		0_{d\times d} & I_{d\times d}\\
		-I_{d\times d} & I_{d\times d}
	\end{pmatrix}.
	\]
	which allows to rewrite the STFT $V_gf$ as composition of the metaplectic operators  $V_gf(x,\xi)=\cF_2\mathfrak{T}_L(f\otimes \bar g)(x,\xi)$. \par
	(i) We may act on the window $g$ by replacing $\bar{g}$ with $\mu(\cA')g$, $\mu(\cA')\in Mp(d,\bR)$. Namely, we consider the time-frequency representation
	\[
	\cU_gf(x,\xi)=\cF_2\mathfrak{T}_L(f\otimes\mu(\cA')\bar g)(x,\xi).
	\]
	Denoting 
	\begin{equation}\label{A'}
		\cA'=\begin{pmatrix}
			A' & B'\\ C' & D'
		\end{pmatrix},
	\end{equation}
	by (\ref{ce1}),
	\[
	f\otimes\mu(\cA')\bar g=\mu(\cA'')(f\otimes \bar g),
	\]
	with
	\[
	\cA''=\begin{pmatrix}
		I_{d\times d} &	0_{d\times d} & 0_{d\times d} & 0_{d\times d} \\
		0_{d\times d} & A' & 0_{d\times d} & B'\\
		0_{d\times d} & 0_{d\times d} & I_{d\times d} & 0_{d\times d} \\
		0_{d\times d} & C' & 0_{d\times d} & D'
	\end{pmatrix},
	\]
	so that $\cU_gf=W_\cA(f,g)$ with
	\[
	\cA=\begin{pmatrix}
		I_{d\times d} & A' & 0_{d\times d} & -B'\\
		0_{d\times d} & C' & I_{d\times d} & D'\\
		0_{d\times d} & -C' & 0_{d\times d} & -D'\\
		-I_{d\times d} & 0_{d\times d} & 0_{d\times d} & 0_{d\times d}
	\end{pmatrix},
	\]
	which is always shift-invertible with $E_\cA$ diagonal.
	This is not surprising, since $\mu(\cA')\bar{g}\in\cS(\rd)$ for $g\in\cS(\rd)$ and different windows in $\cS(\rd)$ yield equivalent norms.\par
	(ii) A more interesting example comes out by applying $\mu(\cA')$, with $\cA'\in Sp(d,\bR)$ having block decomposition in $\eqref{A'}$, to the function $f$. Namely, we consider
	$$\widetilde{\cU}_gf(x,\xi)=\cF_2\mathfrak{T}_L(\mu(\cA')f\otimes\bar g)(x,\xi).$$

	Then  $\widetilde{\cU}_gf=W_\cA(f,g)$ with
	\[
	\cA=\begin{pmatrix}
		A' & -I_{d\times d} &B' & 0_{d\times d} \\
		C' & 0_{d\times d} & D'  & I_{d\times d}\\
		0_{d\times d} &0_{d\times d}& 0_{d\times d} & -I_{d\times d}\\
		-	A' & 0_{d\times d} & -B'& 0_{d\times d}
	\end{pmatrix},
	\]
	and 
	\begin{equation*}
		E_\cA=\cA'=\begin{pmatrix}
			A' & B'\\ C' & D'
		\end{pmatrix},
	\end{equation*}
	in \eqref{A'}.  
	This $W_\cA$ characterizes modulation spaces if and only if the symplectic matrix $\cA'$ is upper triangular, since $\mu(\cA'): M^{p,q}(\rd)\to  M^{p,q}(\rd)$, $p\not=q$, if and only if  $\cA'$ is an upper block triangular matrix \cite{Fuhr}.
	
	Observe that these time-frequency representations find applications in signal processing, see    Zhang et al. \cite{Zhang21bis,Zhang21}.
\end{example}

\begin{example}\label{gSTFT}
	(i). For $z=(x,\xi)\in\rdd$, the time-frequency shift $\pi(z)$ can be written as follows: $\pi(z)g(t)=\Phi_{\tilde{I}}(\xi,t)T_xg(t)$, where $\tilde I\in\bR^{2d\times 2d}$ is the symmetric matrix
	\[	
		\tilde{I}=\begin{pmatrix}0_{d\times d} & I_{d\times d}\\ I_{d\times d} & 0_{d\times d}\end{pmatrix}.
	\]
	Thus, we can define a \textit{generalized STFT} replacing the time-frequency atoms $\pi(z)g$ with the more general atoms $\varsigma(x,\xi):=\Phi_C(\xi,\cdot)T_x$, $x,\xi\in\rd$, where $\Phi_C$ is the chirp function related to the symmetric matrix
	\[	
	C=\begin{pmatrix}C_{11} & C_{12}\\ C_{12}^T & C_{22}\end{pmatrix}
	\]
(hence $C_{11}^T=C_{11}$, $C_{22}^T=C_{22}$). Namely, we may define the  \textit{generalized STFT} $\mathcal{V}_{g,C}f$ as
	\[
		\mathcal{V}_{g,C}f(x,\xi)=|\det(C_{12})|^{1/2}e^{-i\pi C_{11}\xi\cdot \xi}\int_{\rd}f(t)\overline{g(t-x)}e^{-i\pi C_{22}t\cdot t}e^{-2\pi iC_{12}^T\xi\cdot t}dt=\langle f,\varsigma(x,\xi)g\rangle, 
	\]
	$f,g\in L^2(\rd)$. Observe that, if $C_{12}\in GL(d,\bR)$, then $\mathcal{V}_{g,C}f=W_{\cA}(f,g)$, with
	\[
		\cA=\begin{pmatrix}
			I_{d\times d} & -I_{d\times d} & 0_{d\times d} & 0_{d\times d}\\
			-C_{12}^{-T}C_{22} & 0_{d\times d} & C_{12}^{-T} & C_{12}^{-T}\\
			0_{d\times d} & 0_{d\times d} & 0_{d\times d} & -I_{d\times d}\\
			-C_{12}+C_{11}C_{12}^{-T}C_{22} & 0_{d\times d} & -C_{11}C_{12}^{-T} & -C_{11}C_{12}^{-T}
					\end{pmatrix},
	\]
	which is always shift-invertible, but unless $C_{22}\neq 0_{d\times d}$, $E_\cA$ is lower  triangular. \\
(ii) 
	For $\tau\in\bR$ consider the $\tau$-Wigner distribution defined in \eqref{tauWigner} and replace the Gabor atoms $\pi(x,\xi)$ with the more general chirp functions $\Phi_C$ as before to obtain
	\[
		\cW_{\tau,C}(f,g)(x,\xi)=|\det(C_{12})|^{1/2}e^{-i\pi C_{11}\xi\cdot\xi}\int_{\rd}f(x+\tau t)\overline{g(x-(1-\tau)t)}e^{-i\pi C_{22}t\cdot t}e^{-2\pi iC_{12}^T\xi\cdot t}dt,
	\]
	 $f,g\in L^2(\rd)$. Again, if $C_{12}\in GL(d,\bR)$, then $\cW_{\tau,C}(f,g)=W_\cA(f,g)$ with 
	 \[
	 	\cA=\begin{pmatrix}
			(1-\tau)I_{d\times d} & \tau I_{d\times d} & 0_{d\times d} & 0_{d\times d}\\
			-C_{12}^{-T}C_{22} & -C_{12}^{-T}C_{22} & \tau C_{12}^{-T} & -(1-\tau)C_{12}^{-T}\\
			0_{d\times d} & 0_{d\times d} & I_{d\times d} &  I_{d\times d} \\
			-C_{12}+C_{11}C_{12}^{-T}C_{22} & -C_{12}+C_{11}C_{12}^{-T}C_{22} & -\tau C_{11}C_{12}^{-T} &(1-\tau) C_{11}C_{12}^{-T}
		\end{pmatrix}.
	 \]
	 This matrix is shift-invertible if and only if $\tau\neq0,1$, and in this case $E_\cA$ is upper-triangular if and only if $C_{22}=0_{d\times d}$.
\end{example}

\begin{example}
	Every $\cA\in Sp(2d,\bR)$ can be written as $\Pi_{\mathcal{J}} V_Q\cD_LV_{-P}^T$, where $L\in GL(2d,\bR)$, the matrices $Q,P\in\bR^{2d\times 2d}$ are symmetric and, if $1\leq k\leq2d$, $1\leq j_1,\ldots,j_k\leq 2d$ and $\mathcal{J}=\{j_1,\ldots,j_k\}$, $\Pi_\mathcal{J}=\Pi_{j_1}\ldots\Pi_{j_k}$ is the matrix associated to the partial Fourier transform $\cF_{\mathcal{J}}:=\cF_{j_1}\ldots\cF_{j_k}$, cf. Example \ref{symplectic-interchange}. Set 
	\[
		Q=\begin{pmatrix}
			Q_{11} & Q_{12}\\
			Q_{12}^T & Q_{22}
		\end{pmatrix}, \ 
		P=\begin{pmatrix}
			P_{11} & P_{12}\\
			P_{12}^T & P_{22}
		\end{pmatrix}, \ L=\begin{pmatrix}
			L_{11} & L_{12}\\
			L_{21} & L_{22}
		\end{pmatrix} \ and \ L^{-1}=\begin{pmatrix}
			L'_{11} & L'_{12}\\
			L'_{21} & L'_{22}
		\end{pmatrix}.	
	\]
	A direct computation shows 
	\[
		V_Q\cD_LV_{-P}^T=
		\begin{pmatrix}
			L'_{11} & L'_{12} & -L_{11}'P_{11}^T-L_{12}'P_{12}^T & -L_{11}'P_{12} - L_{12}'P_{22}^T\\
			L'_{21} & L'_{22} & -L_{21}'P_{11}^T-L_{22}'P_{12}^T & -L_{21}'P_{12} - L_{22}'P_{22}^T\\
			M_{11} & M_{12} & N_{11} & N_{12}\\
			M_{21} & M_{22} & N_{21} & N_{22} 
%			Q_{11}L_{11}'+Q_{12}L_{21}' & Q_{11}L_{12}'+Q_{12}L_{22}' & a & b\\
%			Q_{12}^TL_{11}'+Q_{22}L_{21}' & Q_{12}^TL_{12}'+Q_{22}L_{22}' & A & B \\
		\end{pmatrix}.
	\]
	In what follows the explicit expressions of $M_{11},M_{12},M_{21},M_{22},N_{11},N_{12},N_{21},N_{22}\in\bR^{d\times d}$ are irrelevant. We consider the case $\mathcal{J}=\{d+1,\ldots,2d\}$, i.e., $\cF_{\mathcal{J}}=\cF_2$. The effect of left-multiplying $V_Q\cD_LV_{-P}^T$ by $\cA_{FT2}$ is to swap the second column blocks of $V_Q\cD_LV_{-P}^T$ with the fourth, up to change the sign of the latter. Hence, the matrix $E_\cA$ associated to
	\[
		W_\cA(f,g)=\cF_2(\Phi_Q\cdot(\cF^{-1}\Phi_P\ast (f\otimes\bar g)))
	\]
	is
	\[
		E_\cA=\begin{pmatrix}
			L_{11}' & -L_{11}'P_{11}^T-L_{12}'P_{12}^T\\
			L_{21}' & -L_{21}'P_{11}^T-L_{22}'P_{12}^T
		\end{pmatrix}.
	\]
	This matrix is upper triangular if and only if $L_{21}'=0_{d\times d}$ or, equivalently, if and only if $L$ is upper triangular. In this case, we also can compute explicitly $L^{-1}$ in terms of the blocks of $L$. Namely, 
	\[
		L=\begin{pmatrix}
			L_{11} & L_{12}\\
			0_{d\times d} & L_{22}
		\end{pmatrix}\in GL(2d,\bR) \Rightarrow L^{-1}=\begin{pmatrix}
			L_{11}^{-1} & -L_{11}^{-1}L_{12}L_{22}^{-1} \\
			0_{d\times d} & L_{22}^{-1}
		\end{pmatrix}.
	\]
	So, the corresponding $E_\cA$ is invertible if and only if $L_{22}^{-1}P_{12}^T\in GL(d,\bR)$, i.e., if and only if $P_{12}\in GL(d,\bR)$. In conclusion,  any metaplectic Wigner distribution of the form
	\[
		W_\cA(f,g)(x,\xi)=\cF_2(\Phi_Q\cdot(\cF^{-1}\Phi_P \ast (f\otimes\bar g)))(x,\xi)
	\]
	with $P,Q\in\bR^{2d\times2d}$ symmetric, $P_{12}\in GL(d,\bR)$, and $L\in GL(2d,\bR)$ upper triangular, can be used to define modulation spaces.
\end{example}

\begin{appendix} 
	\section{}\label{AppB} In Appendix \ref{AppB} we  generalize the results in \cite{Fuhr} to the quasi-Banach setting. Also, we observe that \cite[Corollary 4.2]{Fuhr} holds for general invertible matrices. For $S\in GL(2d,\bR)$, recall the definition of the metaplectic operator 
	$$\mathfrak{T}_S f(z)=|\det(S)|^{\frac{1}{2}}f(Sz),\quad z\in\rdd,$$
	defined in Example \ref{es22} $(ii)$.
%	\begin{proposition}\label{propA11}
%		Let $S\in GL(2d,\bR)$, $V\subseteq L^{p,q}(\rdd)$ be a closed subspace and $0<p,q\leq\infty$. The following statements are equivalent.\\
%		(i) $\mathfrak{T}_S:V\to L^{p,q}(\rdd)$ is everywhere well-defined.\\
%		(ii) $\mathfrak{T}_S:V\to L^{p,q}(\rdd)$ is everywhere well-defined and bounded.
%	\end{proposition}
%	\begin{proof}
%		The fact that $(ii)$ implies $(i)$ is obvious. The assertion will be proved once we check that if $\mathfrak{T}_S$ is everywhere well-defined, then it is also bounded. We use the closed graph theorem. For, let $\big((f_j,\mathfrak{T}_Sf_j)\big)_{j\in\bN}\subseteq V\times L^{p,q}(\rdd)$ be a Cauchy sequence. Since $V\times L^{p,q}(\rdd)$ is complete, there exists $(f,g)\in V\times L^{p,q}(\rdd)$ such that $(f_j,\mathfrak{T}_Sf_j)\to (f,g)$ in the product space, i.e.
%		\[
%			\norm{f_j-f}_{L^{p,q}(\rdd)}\to0 \qquad and \qquad \norm{\mathfrak{T}_Sf_j-g}_{L^{p,q}(\rdd)}\to0 \qquad as \quad j\to+\infty.
%		\]
%		By \cite[Lemma 3.3]{CGO}, there exists a subsequence $(f_{j_k})_k\subseteq (f_j)_j$ such that $\lim_{k\to+\infty}f_{j_k}(x)\to f(x)$ for a.e. $x\in\rd$ and, therefore, also $\lim_{k\to+\infty}\mathfrak{T}_Sf_{j_k}(x)\to \mathfrak{T}_Sf(x)$ for a.e. $x\in\rd$. Also, for the same reason, there exists a subsequence $(f_{j_{k_l}})_l\subseteq(f_{j_k})_k$ such that $\lim_{l\to+\infty}\mathfrak{T}_Sf_{j_{k_l}}(x)=g(x)$ for a.e. $x\in\rd$. It follows that $\mathfrak{T}_Sf=g$ a.e. in $\rd$. The assertion, then, follows by the closed graph theorem, cf. \cite[Theorem 2.15]{RudinFunc}
%	\end{proof}
	
	\begin{theorem}\label{thmA11}
		Let $S\in GL(2d,\bR)$ and $0<p\leq\infty$. The mapping $\mathfrak{T}_S:L^p(\rdd)\to L^p(\rdd)$ is everywhere defined and bounded with $\norm{\mathfrak{T}_S}_{L^p\to L^p}=|\det(S)|^{\frac{1}{2}-\frac{1}{p}}$. We use the convention $1/\infty=0$.
	\end{theorem}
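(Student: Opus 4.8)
The plan is to prove the statement by a direct change of variables, first establishing the case $1 \le p < \infty$ and then handling the endpoints $p = \infty$ and $0 < p < 1$ separately, though in fact the same substitution works uniformly.

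First I would observe that $\mathfrak{T}_S$ is well-defined on the class of measurable functions modulo null sets: since $S \in GL(2d,\bR)$ is a linear isomorphism, $z \mapsto Sz$ maps Lebesgue-null sets to Lebesgue-null sets, so $f(S\cdot)$ is well-defined for any measurable $f$ and the pointwise formula $\mathfrak{T}_S f(z) = |\det S|^{1/2} f(Sz)$ makes sense everywhere once a representative of $f$ is chosen. Then for $0 < p < \infty$ I would compute directly, using the substitution $w = Sz$ (so $\ud w = |\det S| \ud z$):
\[
	\norm{\mathfrak{T}_S f}_{L^p}^p = \int_{\rdd} |\det S|^{p/2} |f(Sz)|^p \ud z = |\det S|^{p/2} \int_{\rdd} |f(w)|^p |\det S|^{-1} \ud w = |\det S|^{p/2 - 1} \norm{f}_{L^p}^p.
\]
Taking $p$-th roots gives $\norm{\mathfrak{T}_S f}_{L^p} = |\det S|^{1/2 - 1/p} \norm{f}_{L^p}$, which is exactly the claimed operator norm (the substitution is an equality, not an inequality, so both the upper bound and its sharpness come for free — the constant is attained on every nonzero $f$). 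For $p = \infty$ the argument is even simpler: $\essupp_z |\det S|^{1/2} |f(Sz)| = |\det S|^{1/2} \essupp_w |f(w)|$ because the essential supremum is invariant under the measure-class-preserving bijection $z \mapsto Sz$, and $|\det S|^{1/2} = |\det S|^{1/2 - 1/\infty}$ under the stated convention $1/\infty = 0$.

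Since the computation is an exact identity in all cases, there is essentially no obstacle; the only point requiring a word of care is the claim that the operator norm \emph{equals} (rather than is merely bounded by) $|\det S|^{1/2 - 1/p}$, but this is immediate from the fact that the change-of-variables formula above is an equality for every $f \in L^p(\rdd)$, so $\norm{\mathfrak{T}_S}_{L^p \to L^p} = \sup_{f \ne 0} \norm{\mathfrak{T}_S f}_{L^p}/\norm{f}_{L^p} = |\det S|^{1/2 - 1/p}$. One could also remark that this is consistent with $\mathfrak{T}_S \mathfrak{T}_{S'} = \mathfrak{T}_{S S'}$ (the metaplectic-type normalization $|\det S|^{1/2}$ is chosen precisely so that $\mathfrak{T}_S$ is an $L^2$-isometry, recovering the case $p = 2$ where the exponent vanishes), but this is not needed for the proof.
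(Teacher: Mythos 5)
Your proof is correct and uses the same change-of-variables computation as the paper, which treats the cases $0<p<\infty$ and $p=\infty$ in exactly the same way (the paper merely states the identity more tersely). Your additional remarks on sharpness of the operator norm and on the $p=2$ isometry are consistent with, but not needed beyond, the paper's argument.
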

	\begin{proof}
		Trivially, if $0<p<\infty$ and $f\in L^p(\rdd)$,
		\[
			\norm{\mathfrak{T}_Sf}_{L^p(\rdd)}=\norm{f\circ S}_{L^p(\rdd)}=|\det(S)|^{\frac{1}{2}-\frac{1}{p}}\norm{f}_{L^p(\rdd)}.
		\]
		Also, $\norm{\mathfrak{T}_Sf}_{L^\infty(\rdd)}=|\det(S)|^{1/2}\norm{f}_{L^\infty(\rdd)}$.
	\end{proof}
	
	\begin{theorem}\label{thmA12}
		Consider $A,D\in GL(d,\bR)$, $B\in \bR^{d\times d}$ and $0<p,q\leq\infty$. Define 
		\[
			S=\begin{pmatrix}
				A & B\\
				0_{d\times d} & D
			\end{pmatrix}.
		\]
		The mapping $\mathfrak{T}_S:L^{p,q}(\rdd)\to L^{p,q}(\rdd)$ is an isomorphism with bounded inverse $\mathfrak{T}_{S^{-1}}$.
	\end{theorem}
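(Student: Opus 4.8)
The plan is to reduce the boundedness of $\mathfrak{T}_S$ on $L^{p,q}(\rdd)$ to the scalar change-of-variables computation already carried out in Theorem \ref{thmA11}, by exploiting the block upper-triangular structure of $S$. Writing $z = (x,y)$ with $x,y\in\rd$, we have $Sz = (Ax + By, Dy)$, so that
\[
	\mathfrak{T}_S f(x,y) = |\det(S)|^{1/2} f(Ax+By, Dy).
\]
First I would factor $S = S_1 S_2$, where
\[
	S_1 = \begin{pmatrix} A & B\\ 0_{d\times d} & I_{d\times d}\end{pmatrix}, \qquad S_2 = \begin{pmatrix} I_{d\times d} & 0_{d\times d}\\ 0_{d\times d} & D\end{pmatrix},
\]
both of which lie in $GL(2d,\bR)$ since $A,D\in GL(d,\bR)$; since $\mathfrak{T}_{S_1 S_2} = \mathfrak{T}_{S_1}\mathfrak{T}_{S_2}$ (composition of metaplectic-type dilations) it suffices to treat the two factors separately.

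For $\mathfrak{T}_{S_2}$ the variable $y$ is dilated by $D$ in the inner ($L^p$) slice only for the first coordinate block—wait, more precisely $S_2 z = (x, Dy)$, so $\mathfrak{T}_{S_2}f(x,y) = |\det D|^{1/2} f(x,Dy)$. For fixed $x$, the inner norm $\norm{f(x,\cdot)}_p$ is unchanged up to the factor $|\det D|^{-1/p}$ by the scalar computation of Theorem \ref{thmA11} applied in $\rd$; then the outer $L^q$-norm in $x$ is untouched, giving $\norm{\mathfrak{T}_{S_2}}_{L^{p,q}\to L^{p,q}} = |\det D|^{1/2 - 1/p}$. For $\mathfrak{T}_{S_1}$ we have $S_1 z = (Ax+By, y)$, so for fixed $y$ the map $x\mapsto Ax + By$ is an invertible affine change of variables in $\rd$; the inner $L^p$-norm in $x$ picks up the factor $|\det A|^{-1/p}$ (translation by $By$ being isometric), and then $\norm{\mathfrak{T}_{S_1}f(\cdot,y)}_p$ as a function of $y$ has the same $L^q$-norm as before, giving $\norm{\mathfrak{T}_{S_1}}_{L^{p,q}\to L^{p,q}} = |\det A|^{1/2-1/p}$. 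Composing, $\mathfrak{T}_S$ is bounded on $L^{p,q}(\rdd)$ with $\norm{\mathfrak{T}_S}_{L^{p,q}\to L^{p,q}} \leq |\det A|^{1/2-1/p}|\det D|^{1/2-1/p}$, and since $S^{-1}$ has the same block upper-triangular shape,
\[
	S^{-1} = \begin{pmatrix} A^{-1} & -A^{-1}BD^{-1}\\ 0_{d\times d} & D^{-1}\end{pmatrix},
\]
the same argument shows $\mathfrak{T}_{S^{-1}} = (\mathfrak{T}_S)^{-1}$ is bounded on $L^{p,q}(\rdd)$; hence $\mathfrak{T}_S$ is an isomorphism.

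The only delicate point is handling the cases $\min\{p,q\}=\infty$ (and, for the quasi-Banach range $p<1$ or $q<1$, checking that the mixed quasi-norm still transforms correctly under the affine substitutions)—but in all these cases the substitution $u = Ax+By$ (resp. $v = Dy$) is still a measure-theoretic isomorphism of $\rd$ and the iterated ($\mathrm{ess\,sup}$ or $L^r$) norms transform exactly as in Theorem \ref{thmA11}, with the convention $1/\infty=0$, so no genuine obstacle arises; one simply replaces the relevant integrals by essential suprema. I would present the factorization $S=S_1S_2$ explicitly and then invoke Theorem \ref{thmA11} twice, keeping the Fubini-type interchange between the inner and outer norms as the one step warranting an explicit line.
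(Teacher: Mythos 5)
Your argument is in substance the paper's own proof: the paper performs the same change of variables directly in the iterated norm (the translation by $B\xi$ is harmless in the inner integral, the dilation by $A$ gives $|\det A|^{-1/p}$, the dilation by $D$ gives $|\det D|^{-1/q}$), without the intermediate factorization, and covers $0<p,q\leq\infty$ with the convention $1/\infty=0$. Two bookkeeping slips in your write-up should be fixed, though neither affects the conclusion. First, with your factors $S_1=\begin{pmatrix} A & B\\ 0_{d\times d} & I_{d\times d}\end{pmatrix}$ and $S_2=\begin{pmatrix} I_{d\times d} & 0_{d\times d}\\ 0_{d\times d} & D\end{pmatrix}$ one has $S_1S_2=\begin{pmatrix} A & BD\\ 0_{d\times d} & D\end{pmatrix}\neq S$; the correct identity is $S=S_2S_1$, and since $\mathfrak{T}_{S_1}\mathfrak{T}_{S_2}f=|\det S_1|^{1/2}|\det S_2|^{1/2}\,f(S_2S_1\,\cdot)=\mathfrak{T}_{S_2S_1}f$ (the map $S\mapsto\mathfrak{T}_S$ composes anti-homomorphically), your two misstatements cancel and $\mathfrak{T}_S=\mathfrak{T}_{S_1}\mathfrak{T}_{S_2}$ does hold --- but both intermediate claims are false as written. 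Second, in the paper's convention the inner $L^p$-norm is taken in the first variable and the outer $L^q$-norm in the second; hence for $\mathfrak{T}_{S_2}f(x,y)=|\det D|^{1/2}f(x,Dy)$ the dilation by $D$ affects the outer variable and yields $\norm{\mathfrak{T}_{S_2}}_{L^{p,q}\to L^{p,q}}=|\det D|^{\frac12-\frac1q}$, not $|\det D|^{\frac12-\frac1p}$; the total operator norm is $|\det A|^{\frac12-\frac1p}|\det D|^{\frac12-\frac1q}$, exactly as the paper computes. Since the theorem only asserts boundedness of $\mathfrak{T}_S$ and of $\mathfrak{T}_{S^{-1}}=(\mathfrak{T}_S)^{-1}$, and $S^{-1}$ is again block upper triangular with invertible diagonal blocks, these corrections leave your proof intact.
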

	\begin{proof}
		Let $f\in L^{p,q}(\rdd)$. Then, 
		\begin{align*}
			\norm{\mathfrak{T}_Sf}_{L^{p,q}(\rdd)}&=\norm{\xi\mapsto|\det(S)|^{1/2}\norm{f(A\cdot +B\xi,D\xi)}_{L^p(\rd)}}_{L^q(\rd)}\\
			&=\norm{\xi\mapsto|\det(S)|^{1/2}\norm{f(A\cdot,D\xi)}_{L^p(\rd)}}_{L^q(\rd)}\\
			&=\norm{\xi\mapsto|\det(S)|^{1/2}|\det(A)|^{-1/p}\norm{f(\cdot,D\xi)}_{L^p(\rd)}}_{L^q(\rd)}\\
			&=|\det(S)|^{1/2}|\det(A)|^{-1/p}|\det(D)|^{-1/q}\norm{\xi\mapsto\norm{f(\cdot,\xi)}_{L^p(\rd)}}_{L^q(\rd)}\\
			&=|\det(A)|^{\frac{1}{2}-\frac{1}{p}}|\det(D)|^{\frac{1}{2}-\frac{1}{q}}\norm{f}_{L^{p,q}(\rdd)},
		\end{align*}
		where $1/\infty=0$. Observe that $\mathfrak{T}_S^{-1}=\mathfrak{T}_{S^{-1}}$. It remains to prove that $\mathfrak{T}_{S^{-1}}$ is also bounded. Since $A$ (or, equivalently, $D$) is invertible, this follows by
		\[
			S^{-1}=\begin{pmatrix}
				A^{-1} & -A^{-1}BD^{-1}\\
				0_{d\times d} & D^{-1}
			\end{pmatrix}
		\]
		and by the first part of the statement.
	\end{proof}
	
	%\section{Change of variables on weighted mixed norm spaces}
	\begin{theorem}\label{thmA21}
	Let $m\in\mathcal{M}_v(\rdd)$, $S\in GL(2d,\bR)$ and $0<p,q\leq\infty$. Consider the operator $$(\mathfrak{T}_S)_m: f\in L^{p,q}_m(\rdd)\mapsto |\det(S)|^{1/2}f\circ S.$$ If $m\circ S\asymp m$, then
	$\mathfrak{T}_S:L^{p,q}(\rdd)\to L^{p,q}(\rdd)$ is bounded if and only if $(\mathfrak{T}_S)_m:L^{p,q}_m(\rdd)\to L^{p,q}_m(\rdd)$ is bounded.
	\end{theorem}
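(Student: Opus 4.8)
The plan is to reduce the weighted statement to the unweighted one, which was already handled in Theorem \ref{thmA11} and Theorem \ref{thmA12}, by absorbing the weight into the function. The key observation is that since $m\circ S\asymp m$, multiplication by $m$ essentially commutes with the dilation operator $\mathfrak{T}_S$ up to a bounded factor. More precisely, I would introduce the linear isometric isomorphism $\Psi_m: L^{p,q}_m(\rdd)\to L^{p,q}(\rdd)$, $\Psi_m f = m\cdot f$, whose inverse is $f\mapsto f/m$, and compute the conjugated operator $\Psi_m\circ(\mathfrak{T}_S)_m\circ\Psi_m^{-1}$ acting on $L^{p,q}(\rdd)$.

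The first step is to carry out this conjugation explicitly. For $h\in L^{p,q}(\rdd)$, we have $\Psi_m^{-1}h = h/m$, then $(\mathfrak{T}_S)_m(h/m)(z) = |\det(S)|^{1/2}\,h(Sz)/m(Sz)$, and finally applying $\Psi_m$ gives $m(z)\,|\det(S)|^{1/2}\,h(Sz)/m(Sz) = \dfrac{m(z)}{m(Sz)}\,(\mathfrak{T}_S h)(z)$. Thus the conjugated operator is $h\mapsto \frac{m}{m\circ S}\cdot \mathfrak{T}_S h$. The second step is to note that the hypothesis $m\circ S\asymp m$ means exactly that the function $w:=m/(m\circ S)$ satisfies $c\leq w(z)\leq C$ for some constants $0<c\leq C<\infty$ and all $z$, so multiplication by $w$ is a bounded operator on $L^{p,q}(\rdd)$ with bounded inverse (multiplication by $1/w$).

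The third step assembles the equivalence. Since $\Psi_m$ is an isometric isomorphism between $L^{p,q}_m(\rdd)$ and $L^{p,q}(\rdd)$, the operator $(\mathfrak{T}_S)_m$ is bounded on $L^{p,q}_m(\rdd)$ if and only if its conjugate $h\mapsto w\cdot\mathfrak{T}_S h$ is bounded on $L^{p,q}(\rdd)$. Because multiplication by $w$ (and by $1/w$) is bounded on $L^{p,q}(\rdd)$, the latter holds if and only if $\mathfrak{T}_S$ itself is bounded on $L^{p,q}(\rdd)$: one direction composes $\mathfrak{T}_S$ with multiplication by $w$, the other composes $h\mapsto w\cdot\mathfrak{T}_S h$ with multiplication by $1/w$. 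This gives both implications of the "if and only if" in the statement.

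I do not anticipate a serious obstacle here; the only point requiring a little care is the precise meaning of the weighted mixed-norm space and the fact that $\Psi_m$ is genuinely an isometry of $L^{p,q}_m(\rdd)$ onto $L^{p,q}(\rdd)$, which is immediate from the definition $\norm{f}_{L^{p,q}_m} = \norm{mf}_{L^{p,q}}$ (with the usual adjustments when $\min\{p,q\}=\infty$, where the same manipulation goes through with essential suprema in place of integrals). One should also remark that the constants implied in $m\circ S\asymp m$ are exactly what controls the operator norms, so the boundedness is quantitative, but for the stated equivalence only qualitative boundedness is needed.
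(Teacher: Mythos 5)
Your argument is correct and is essentially the paper's own proof in slightly more abstract clothing: the paper carries out the same reduction by writing $\norm{\mathfrak{T}_S f}_{L^{p,q}_m}=\norm{\mathfrak{T}_S(f\cdot(m\circ S^{-1}))}_{L^{p,q}}$ and using $m\circ S^{\pm1}\asymp m$ to trade the transported weight for $m$, which is exactly your conjugation identity $\Psi_m(\mathfrak{T}_S)_m\Psi_m^{-1}=\frac{m}{m\circ S}\cdot\mathfrak{T}_S$ written out in norms. Packaging it as conjugation by the isometry $\Psi_m$ together with the boundedness of multiplication by $m/(m\circ S)$ and its reciprocal is a clean equivalent formulation, so no changes are needed.
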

	\begin{proof}
		Observe that the condition $m\circ S\asymp m$ is equivalent to $m\circ S^{-1}\asymp m$. Assume that $\mathfrak{T}_S$ is bounded on $L^{p,q}(\rdd)$ and consider $f\in L^{p,q}_m(\rdd)$. Then, $fm\in L^{p,q}(\rdd)$ and
		\begin{align*}
			\norm{\mathfrak{T}_Sf}_{L^{p,q}_m(\rdd)}&=\norm{\mathfrak{T}_Sf \cdot m}_{L^{p,q}(\rdd)}=\norm{\mathfrak{T}_S(f\cdot (m\circ S^{-1}))}_{L^{p,q}(\rdd)}\\
			&\lesssim \norm{f\cdot (m\circ S^{-1})}_{L^{p,q}(\rdd)}=\left\|\left(f\frac{m\circ S^{-1}}{m}\right)m\right\|_{L^{p,q}(\rdd)}\\
			&=\left\|f\frac{m\circ S^{-1}}{m}\right\|_{L^{p,q}_m(\rdd)}\lesssim\norm{f}_{L^{p,q}_m(\rdd)}.
		\end{align*}
		For the converse, assume that $(\mathfrak{T}_S)_m:L^{p,q}_m(\rdd)\to L^{p,q}_m(\rdd)$ is bounded and take $f\in L^{p,q}(\rdd)$. Then, $f/m\in L^{p,q}_m(\rdd)$ and
		\begin{align*}
			\norm{\mathfrak{T}_Sf}_{L^{p,q}(\rdd)}&\lesssim \left\|\mathfrak{T}_Sf\left(\frac{m}{m\circ S}\right)\right\|_{L^{p,q}(\rdd)}\asymp\left\|\mathfrak{T}_S\left(\frac{f}{m}\right)\right\|_{L^{p,q}(\rdd)}\\
			&=\left\|\mathfrak{T}_S\left(\frac{f}{m}\right)\right\|_{L^{p,q}_m(\rdd)}\lesssim \norm{f/m}_{L^{p,q}_m(\rdd)}=\norm{f}_{L^{p,q}(\rdd)}.
		\end{align*}
	\end{proof}
	
	\section{}
	In this Appendix we study tensor products of metaplectic operators and refer to \cite{Kadison} for the theory of tensor products of HIlbert spaces. We are interested in proving the following result.
	\begin{theorem}\label{thmOtimes}
		Let $\mu(\cA),\mu(\cB)\in Mp(d,\bR)$ with $\cA=\pi^{Mp}(\mu(\cA))$ and $\cB=\pi^{Mp}(\mu(\cB))$ having block decompositions
		\[
		\cA=\begin{pmatrix}
			A & B\\ C & D
		\end{pmatrix},\  and \  \cB=\begin{pmatrix}
		E & F \\
		G & H
		\end{pmatrix}.
	\]
	Then, the bilinear operator $S:L^2(\rd)\times L^2(\rd)\to L^2(\rdd)$ defined for all $f,g\in L^2(\rd)$ as
	\[
		S(f,g)=\mu(\cA)f\otimes\mu(\cB)g
	\]
	extends uniquely to a metaplectic operator $\mu(\cC)\in Mp(2d,\bR)$ with $\cC=\pi^{Mp}(\mu(\cC))$ having block decomposition
	\[	\cC=\begin{pmatrix}
			A & 0_{d\times d} & B & 0_{d\times d}\\
			0_{d\times d} & E & 0_{d\times d} & F\\
			C & 0_{d\times d} & D & 0_{d\times d}\\
			0_{d\times d} & G & 0_{d\times d} & H
		\end{pmatrix}.
	\]
%	and satisfying
%	\[
%	\mu(\cC)(f\otimes g)=\mu(\cA)f\otimes\mu(cB) g
%	\]
	\end{theorem}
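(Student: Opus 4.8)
The plan is to work at the level of the Heisenberg group representations that define metaplectic operators via \eqref{muAdef}. Since $S(f,g)=\mu(\cA)f\otimes\mu(\cB)g$ is bilinear and bounded on $L^2(\rd)\times L^2(\rd)$, it extends uniquely to a bounded linear operator $T$ on the completed tensor product $L^2(\rd)\otimes L^2(\rd)\cong L^2(\rdd)$ (this is the universal property of the Hilbert space tensor product, see \cite{Kadison}); moreover $T=\mu(\cA)\otimes\mu(\cB)$ is unitary, being the tensor product of two unitaries. So the only real content is to identify $T$ as a metaplectic operator and to compute the associated symplectic matrix $\cC$.

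First I would record how time-frequency shifts on $\rdd$ factor through the tensor structure: for $(x_1,x_2,\xi_1,\xi_2)\in\rdd\times\rdd$ (writing a point of $\rdd$ in its ``split'' coordinates), one has $\pi_{2d}(x_1,x_2,\xi_1,\xi_2)=\pi_d(x_1,\xi_1)\otimes\pi_d(x_2,\xi_2)$ up to the obvious reordering of coordinates, and similarly for the Schrödinger representation $\rho$ of the Heisenberg group, $\rho_{2d}$ factors as a product $\rho_d\otimes\rho_d$ with the central variables added. The key step is then the intertwining computation: using \eqref{muAdef} for $\mu(\cA)$ and $\mu(\cB)$ separately and the fact that $(\mu(\cA)\otimes\mu(\cB))(\pi_d(x_1,\xi_1)\otimes\pi_d(x_2,\xi_2))(\mu(\cA)\otimes\mu(\cB))^{-1}=(\mu(\cA)\pi_d(x_1,\xi_1)\mu(\cA)^{-1})\otimes(\mu(\cB)\pi_d(x_2,\xi_2)\mu(\cB)^{-1})$, one sees that $T$ conjugates $\rho_{2d}$ into $\rho_{2d}\circ\cC$ for the matrix $\cC$ which acts as $\cA$ on the $(x_1,\xi_1)$-block and as $\cB$ on the $(x_2,\xi_2)$-block. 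By the Stone--von Neumann uniqueness (the defining property of $Mp$), this forces $T=\pm\mu(\cC)\in Mp(2d,\bR)$, so $T$ is metaplectic and $\pi^{Mp}(T)=\cC$.

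It remains to write $\cC$ in the paper's coordinate ordering $(x_1,x_2,\xi_1,\xi_2)$ rather than the ``split'' ordering $(x_1,\xi_1,x_2,\xi_2)$. Conjugating the block-diagonal matrix $\mathrm{diag}(\cA,\cB)$ by the permutation that interchanges the middle two coordinate blocks turns the $2\times2$ blocks $A,B,C,D$ of $\cA$ and $E,F,G,H$ of $\cB$ into the interleaved $4\times4$ block pattern displayed in the statement; this is a bookkeeping step, and one should check en passant that the resulting $\cC$ indeed satisfies $\cC^TJ_{2d}\cC=J_{2d}$, which it does since a permutation of coordinates consistent with the symplectic form preserves symplecticity and $\cA,\cB$ are symplectic. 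I expect the main obstacle to be purely notational: keeping the coordinate reorderings straight between $\pi_{2d}$, $\rho_{2d}$, and the block decomposition \eqref{blockA}, and making sure the sign ambiguity inherent in $Mp$ (the kernel $\{\pm\mathrm{id}\}$ of $\pi^{Mp}$) is handled cleanly, since $S$ determines $T$ exactly but $\cC$ only up to that sign, which is harmless here because the statement is about $\pi^{Mp}(\mu(\cC))=\cC$.
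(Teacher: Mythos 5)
Your proposal is correct and follows essentially the same route as the paper: extend $S$ to a unitary $T=\mu(\cA)\otimes\mu(\cB)$ on $L^2(\rd)\otimes L^2(\rd)\cong L^2(\rdd)$ via the Hilbert tensor product (the paper also cites \cite{Kadison}), then verify the intertwining relation $T\rho(z,\tau)=\rho(\cC z,\tau)T$ by factoring $\rho$ through the tensor structure, and conclude from the defining property of metaplectic operators that $T=\pm\mu(\cC)$ with the interleaved block matrix $\cC$. The only differences are cosmetic: you obtain unitarity directly as a tensor product of unitaries (the paper checks it by hand plus a density argument) and you phrase the block structure as conjugating $\mathrm{diag}(\cA,\cB)$ by a coordinate permutation, while the paper writes the phase computation for $\rho(\cC z,\tau)T(f\otimes g)$ explicitly.
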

	\begin{proof}
		By \cite[Proposition 2.6.6]{Kadison}, there exists a unique linear mapping $T:L^2(\rd)\otimes L^2(\rd)=L^2(\rdd)\to L^2(\rdd)$ satisfying
		\[
			T(f\otimes g)=\mu(\cA)f\otimes\mu(\cB)g, \qquad f,g\in L^2(\rd).
		\]
%		$T$ is characterized by its action on $\text{span}\{\varphi\otimes\psi : \varphi,\psi\in L^2(\rd)\}$ by
%		\[
%			T(\sum_{j=1}^Nf_j\otimes g_j)=\sum_{j=1}^NS(f_j,g_j)
%		\]
%		($f_j,g_j\in L^2(\rd)$, $j=1,\ldots,N$). 
		By \cite[Proposition 2.6.12]{Kadison}, this extension is also bounded. Moreover, $T$ is invertible because $\mu(\cA)$ and $\mu(\cB)$ are. In particular, $T$ is surjective. To prove that $T$ is a metaplectic operator, it remains to check that $T$ preserves the $L^2$ inner product and that 
		\begin{equation}\label{meta}
			T\rho( z,\tau)=\rho(\cC z,\tau)T, \qquad z\in\bR^{4d}, \ \tau\in\bR.
		\end{equation}
		For all $f,g,\varphi,\psi\in L^2(\rd)$,
		\begin{align*}
			\la T(f\otimes g),T(\varphi\otimes\psi)\ra&=\la\mu(\cA)f,\mu(\cA)\varphi\ra\la\mu(\cB)g,\mu(\cB)\psi\ra=\la f,\varphi\ra\la g,\psi \ra\\
			&=\la f\otimes g,\varphi\otimes\psi\ra.
		\end{align*}
		If $\Phi\in L^2(\rdd)$, $\Phi=\sum_{j=1}^\infty c_j\varphi_j\otimes\psi_j$, with the sequence $(c_j)_j\subseteq\bC$ vanishing definitely,
		\begin{align*}
			\la T(f\otimes g),T\Phi\ra&=\sum_j\bar c_j \la T(f\otimes g),T(\varphi_j\otimes \psi_j)\ra=\sum_j \bar c_j \la f\otimes g,\varphi_j\otimes \psi_j\ra\\
			&=\la f\otimes g,\Phi\ra.
		\end{align*}
		Now, consider $\Phi\in L^2(\rdd)$ and  $(\Phi_j)_j\subseteq \text{span}\{\varphi\otimes\psi : \varphi,\psi\in L^2(\rd)\}$ satisfy $\lim_{j\to+\infty}\norm{\Phi-\Phi_j}_{L^2(\rdd)}=0$. Then, by the continuity of $T$ and of the inner product,
		\begin{align*}
			\la T(f\otimes g),T\Phi\ra&=\la T( f\otimes g), T(\lim_{j\to+\infty}\Phi_j)\ra=\lim_{j\to+\infty}\la T( f\otimes g),T\Phi_j\ra\\
			&=\lim_{j\to+\infty}\la f\otimes g,\Phi_j\ra=\la f\otimes g,\Phi\ra.
		\end{align*}
		So, we proved that
		\begin{equation}\label{unit}
			\la T F, T\Phi\ra=\la F,\Phi\ra
		\end{equation}
		holds for all $F=f\otimes g$, $f,g\in L^2(\rd)$ and all $\Phi\in L^2(\rdd)$. The same argument applied to the first component of the inner product shows that (\ref{unit}) holds for all $F\in L^2(\rdd)$ as well. So, $T$ is surjective and preserves the inner product, hence it is unitary. It remains to prove (\ref{meta}), which states that $T$ is a metaplectic operator with $\pi^{Mp}(T)=\cC$.
		
		For, consider $f,g\in L^2(\rd)$,  $\tau\in\bR$, $z=(x_1,x_2,\xi_1,\xi_2)\in\bR^{4d}$ and $z_j=(x_j,\xi_j)\in\rdd$ ($j=1,2$). 
		First, observe that 
\begin{equation}\label{tens1}
	\rho(z,\tau)(f\otimes g)=e^{-2\pi i\tau}\rho(z_1,\tau)f\otimes\rho(z_2,\tau)g
\end{equation}
and
\[
	\pi(\cC z)(f\otimes g)=\pi(\cA z_1)f\otimes \pi(\cB z_2)g,
\]
so that
\begin{align*}
	\rho(\cC z,\tau)T(f\otimes g)&=e^{2\pi i\tau}e^{-i\pi(Ax_1+B\xi_1)(Cx_1+D\xi_1)}e^{-i\pi(Ex_2+F\xi_2)(Gx_2+H\xi_2)}\\
	&\qquad\qquad\times \pi(\cC z)(\mu(\cA)f\otimes \mu(\cB)g)\\
	&=e^{2\pi i\tau}e^{-i\pi(Ax_1+B\xi_1)(Cx_1+D\xi_1)}e^{-i\pi(Ex_2+F\xi_2)(Gx_2+H\xi_2)}\\
	&\qquad\qquad\times \pi(\cA z_1)\mu(\cA)f\otimes \pi(\cB z_2)\mu(\cB)g\\
	&=e^{-2\pi i\tau}\rho(\cA z_1,\tau)\mu(\cA)f\otimes\rho(\cB z_2,\tau)\mu(\cB)g\\
	&=e^{-2\pi i\tau}\mu(\cA)\rho(z_1,\tau)f\otimes\mu(\cB)\rho(z_2,\tau)g\\
	&=e^{-2\pi i\tau}T(\rho(z_1,\tau)f\otimes \rho(z_2,\tau)g)\\
	&=T\rho(z,\tau)(f\otimes g)
\end{align*}
and (\ref{meta}) follows for tensor products. Next, if $F = \sum_{j=1}^\infty c_jf_j\otimes g_j$, $(c_j)_j\subseteq\bC$ definitely zero, $f_j,g_j\in L^2(\rd)$ ($j=1,2,\ldots$),
\begin{align*}
	\rho(\cC z,\tau)TF&=\rho(\cC z,\tau)T(\sum_jc_jf_j\otimes g_j)=\rho(\cC z,\tau)\sum_jc_jT(f_j\otimes g_j)\\
	&=\sum_jc_j\rho(\cC z,\tau)T(f_j\otimes g_j)=\sum_jc_jT\rho(z,\tau)(f_j\otimes g_j)=T\rho(z,\tau)F,
\end{align*}
%where we used that the functions $\rho(z,\tau)(f_j\otimes g_j)$ are, up to a unitary constant, tensor products by (\ref{tens1}). 
and the assertion follows by a standard density argument.
	\end{proof}

	\begin{remark}
		Under the same notation as in Theorem \ref{thmOtimes}, if $\cA=I_{d\times d}$, then
		\begin{equation}\label{ce1}
			f\otimes\mu(\cB) g = \mu(\cC_1)(f\otimes g),
		\end{equation}
		where
		\[
			\cC_1=\begin{pmatrix}
				I_{d\times d} & 0_{d\times d} & 0_{d\times d} & 0_{d\times d}\\
				0_{d\times d} & E & 0_{d\times d} & F\\
				0_{d\times d} & 0_{d\times d} & I_{d\times d} & 0_{d\times d}\\
				0_{d\times d} & G & 0_{d\times d} & H
			\end{pmatrix}.
		\]
	If $\cB=I_{d\times d}$, we infer
		\[
			\mu(\cA) f\otimes g = \mu(\cC_2)(f\otimes g),
		\]
		where
		\[
	\cC_2=\begin{pmatrix}
			A & 0_{d\times d} & B & 0_{d\times d}\\
			0_{d\times d} & I_{d\times d} & 0_{d\times d} & 0_{d\times d}\\
			C & 0_{d\times d} & D & 0_{d\times d}\\
			0_{d\times d} & 0_{d\times d} & 0_{d\times d} & I_{d\times d}
		\end{pmatrix}.
	\]
	Observe that $\cC=\cC_1\cC_2=\cC_2\cC_1$.
	\end{remark}

\end{appendix}

\end{document}